\newtheorem{teo}{Theorem}
\newtheorem{prop}{Proposition}
\newtheorem{cor}[teo]{Corollary}
\newcommand{\sinc}{\operatorname{sinc}}
\newcommand{\espan}{\operatorname{span}}
\DeclareMathOperator*{\esup}{ess\,sup}
\DeclareMathOperator*{\einf}{ess\,inf}
\title{{\bf Convolution systems on discrete abelian groups as a unifying strategy in sampling theory}}
\author{
{\bf A.~G. Garc\'{\i}a}\thanks{E-mail:\texttt{agarcia@math.uc3m.es}}, \,\,
{\bf M.~A. Hern\'andez-Medina}\thanks{E-mail:\texttt{miguelangel.hernandez.medina@upm.es}}
{\bf\,\, and \,\, G. P\'erez-Villal\'on}\thanks{E-mail:\texttt{gerardo.perez@upm.es}}
}
\date{}
\begin{document}
\maketitle
\begin{itemize}
\item[*] Departamento de Matem\'aticas, Universidad Carlos III de Madrid,
 Avda. de la Universidad 30, 28911 Legan\'es-Madrid, Spain.
\item[\dag] Information Processing and Telecommunications Center, Universidad Polit\'ecnica de Madrid, Departamento de Matem\'atica Aplicada a las Tecnolog\'{\i}as de la Informaci\'on y las Comunicaciones, E.T.S.I.T., Avda. Complutense 30, 28040 Madrid, Spain.
 \item[\ddag] Departamento de Matem\'atica Aplicada a las Tecnolog\'{\i}as de la Informaci\'on y las Comunicaciones, E.T.S.I.T., Universidad Polit\'ecnica de Madrid,
 Nicola Tesla s/n, 28031 Madrid, Spain.
\end{itemize}
\begin{abstract}
A regular sampling theory in a multiply generated unitary invariant subspace of a separable Hilbert space $\mathcal{H}$ is proposed. This subspace is associated to a unitary representation of a countable discrete abelian group $G$ on $\mathcal{H}$. The samples are defined by means of a filtering process which generalizes the usual sampling settings. The multiply generated  setting allows to consider some examples where the group $G$ is non-abelian as, for instance, crystallographic groups. Finally, it is worth to mention that classical average or pointwise sampling in shift-invariant subspaces are particular examples included in the followed approach.
\end{abstract}
{\bf Keywords}: Discrete abelian groups; unitary representation of a group; convolution systems; dual frames; sampling expansion.

\noindent{\bf AMS}: 42C15; 94A20; 22B05; 20H15.
\section{Introduction}
\label{section1}
In this paper we propose a regular sampling theory for a multiply generated $U$-invariant subspace of a separable Hilbert space $\mathcal{H}$. By regular sampling we mean that the samples are taken following the pattern given by the action of a discrete abelian group $G$ on $\mathcal{H}$ by means of a unitary representation $g\mapsto U(g)$ of the group $G$ on $\mathcal{H}$ which also defines the $U$-invariant subspace where the sampling will be carried out. Recall that a unitary representation of $G$ on $\mathcal{H}$ is a homomorphism of $G$ into  the group of unitary operators in $\mathcal{H}$.

In classical shift-invariant subspaces of $L^2(\mathbb{R}^d)$ this group is $\mathbb{Z}^d$ or a subgroup of it, and the unitary representation is given by the integer shifts. In general, the $U$-invariant subspace in $\mathcal{H}$ looks like
\[
\mathcal{V}_{\Phi}=\Big\{ \sum_{n=1}^N\sum_{g\in G} x_{n}(g) U(g)\varphi_{n} \, :\,  x_{n}\in \ell^2(G),\,\, n=1, 2, \dots, N\Big\}\,,
\]
where $\Phi=\{\varphi_{1},\varphi_{2,}\ldots,\varphi_{N}\}$ denotes a fixed set of generators in $\mathcal{H}$.

For each $f\in \mathcal{V}_\Phi$ we consider two sets of samples $\{\mathcal{L}_mf(g)\}_{g\in G;\,n=1, 2,\dots,M}$  defined as  
\[
\mathcal{L}_{m}f(g):=\big\langle f, U(g)\psi_{m} \big\rangle_{\mathcal{H}}\quad \text{or}\quad \mathcal{L}_{m}f(g):=\big[U(-g)f \big](t_m),\quad g\in G\,,
\]
where, in the first case $\psi_1, \psi_2, \dots, \psi_M$ denote $M$ elements in $\mathcal{H}$, which do not belong necessarily to 
$\mathcal{V}_\Phi$, and, in the second case, we take $\mathcal{H}:=L^2(\mathbb{R}^d)$ and $t_1, t_2, \dots, t_M$ are $M$ fixed points in $\mathbb{R}^d$. In the special case where $\mathcal{H}:=L^2(\mathbb{R}^d)$, $G:=\mathbb{Z}^d$ and $\big[U(p)f\big](t):=f(t-p)$, $t\in \mathbb{R}^d$ and $p\in \mathbb{Z}^d$, the above samples correspond to average or pointwise sampling, respectively, in the corresponding shift-invariant subspace $V_\Phi^2$ of $L^2(\mathbb{R}^d)$.

\medskip

These data samples have in common that can be expressed as a convolution system in the  product Hilbert space $\ell^2_{_N}(G):=\ell^2(G)\times \dots \times \ell^2(G)$ ($N$ times); namely, for $f= \sum_{n=1}^N\sum_{g\in G} x_{n}(g) U(g)\varphi_{n}$ in $\mathcal{V}_\Phi$ and $m=1,2, \dots,M$ we have
\[
\mathcal{L}_{m}f(g)=\sum_{n=1}^N (a_{m,n}\ast x_{n})(g),\,\,\, g\in G\,,
\] 
for some $MN$ sequences $a_{m,n}\in \ell^2(G)$ (see Section \ref{section2} below). Thus, in general, the acquisition of samples can be modeled as a filtering process in $\ell^2_{_N}(G)$. This is very usual situation: the regular samples 
$\{f(n)\}_{n\in \mathbb{Z}}$ of any bandlimited function $f$ in the Paley-Wiener space $PW_\pi$ are given as $f(n)=(f\ast \sinc)(n)$, \, $n\in \mathbb{Z}$, where $\sinc$ denotes the cardinal sine function. In the case of average sampling we have that $\langle f, \psi(\cdot-n)\rangle_{L^2(\mathbb{R})}=(f\ast \widetilde{\psi})(n)$, \, $n\in \mathbb{Z}$, where 
$\widetilde{\psi}(t)=\overline{\psi(-t)}$ is the average function.

\medskip

Under appropriate hypotheses on the Fourier transforms $\widehat{a}_{m,n}\in L^2(\widehat{G})$ of $a_{m,n}\in \ell^2(G)$ we obtain (see Thm.~\ref{Usampling} in Section \ref{section4}) necessary and sufficient conditions for the existence of stable reconstruction formulas in $\mathcal{V}_\Phi$ having the form
\[
f=\sum_{m=1}^M\sum_{g\in G} \mathcal{L}_m f(g)\,U(g)S_m\,,
\]
for some sampling functions $S_m \in \mathcal{V}_\Phi$,\, $m=1,2,\dots,M$, where the corresponding sequence $\{U(g)S_m\}_{g\in G;\, m=1,2,\dots,M}$ forms a frame for 
$\mathcal{V}_\Phi$. The use of the Fourier transform in $\ell^2(G)$, and the discrete nature of the sampling problem treated here impose that $G$ will be a countable discrete abelian group. However, as we will see in Section \ref{subsection4-2}, some cases involving non-abelian groups expressed as semi-direct or direct product of groups can be considered inside our study; this is the case of crystallographic groups. Notice that working in  locally compact abelian groups is not just a unified way of dealing with the classical  groups 
$\mathbb{R}^d, \mathbb{Z}^d, \mathbb{T}^d, \mathbb{Z}_s^d$: signal processing often involves products of these groups which are also locally compact abelian groups. For example, multichannel video signal involves the group $\mathbb{Z}^d \times \mathbb{Z}_s$, where $d$ is the number of channels and $s$ the number of pixels of each image.

\medskip

The used mathematical technique is that of frame theory (see, for instance, Ref.~\cite{ole:16}). The existence of the above sampling formula relies on the existence of dual frames for the Hilbert space product $\ell^2_{_N}(G)$ having the form $\{T_{g}\, \mathbf{b}_{m}\}_{g\in G;\, m=1,2,\dots,M}$, where $\mathbf{b}_m \in \ell^2_{_N}(G)$ and $T_{g}\mathbf{b}_m=\mathbf{b}_m(\cdot-g)$ denotes the translation operator in $\ell^2_{_N}(G)$. This can be reformulated as follows: given an analysis convolution system $\mathcal{A}: \ell^2_{_N}(G) \rightarrow \ell^2_{_M}(G)$ associated to data sampling, there exists another synthesis convolution system $\mathcal{B}: \ell^2_{_M}(G) \rightarrow \ell^2_{_N}(G)$ such that $\mathcal{B} \,\mathcal{A}=\mathcal{I}_{\ell^2_{_N}(G)}$. In other words, working in the Fourier domain $L^2(\widehat{G})$, we exploit the relationship between bounded convolution systems and frame theory in the product Hilbert  space $\ell^2_{_N}(G)$. All needed results on this relationship are included in Section \ref{section3}.

\medskip

Finally, it is worth to mention that most of the well known sampling results can be considered as particular examples of this approach; see Sections \ref{subsection4-2}--\ref{subsection4-4} for the details. A comparison with some previous similar sampling results is presented in Section \ref{subsection4-5}, where some affinities and differences are exhibited.

\section{Data samples as a filtering process}
\label{section2}
Let $\mathcal{H}$ be a separable Hilbert space, and let $G \ni g\mapsto U(g)\in \mathcal{U}(\mathcal{H})$ be a unitary representation of a countable discrete abelian group $(G,+)$ on  $\mathcal{H}$, i.e., it satisfies $U(g+g')=U(g)U(g')$, $U(-g)=U^{-1}(g)=U^*(g)$ for $g, g'\in G$. Given a set $\Phi=\{\varphi_{1},\varphi_{2,}\ldots,\varphi_{N}\}$ of generators in $\mathcal{H}$ we consider the subspace  $\mathcal{H}$ defined as
$\mathcal{V}_{\Phi}:=\overline{\text{span}}_{\mathcal{H}}\{U(g)\varphi_{n}\}_{g\in G;\,n=1, 2,\dots,N}$. Assuming that $\{U(g)\varphi_{n}\}_{g\in G;\,n=1, 2,\dots,N}$ is a Riesz sequence in $\mathcal{H}$, i.e., a Riesz basis for $\mathcal{V}_\Phi$, this subspace can be expressed as
\[
\mathcal{V}_{\Phi}=\Big\{ \sum_{n=1}^N\sum_{g\in G} x_{n}(g) U(g)\varphi_{n} \, :\,  x_{n}\in \ell^2(G),\,\, n=1, 2, \dots, N\Big\}\,.
\]
Let us motivate the sampling approach followed in this work by means of a couple of examples:

\noindent $\bullet$ Given $M$ elements $\psi_{m}\in \mathcal{H}$, $m=1, 2,\dots,M$, which do not belong necessarily to $\mathcal{V}_{\Phi}$, for any $f\in \mathcal{V}_{\Phi}$ we define for $m=1, 2, \dots, M$ its (generalized) average samples as
\begin{equation}
\label{samples1}
\mathcal{L}_{m}f(g):=\big\langle f, U(g)\psi_{m} \big\rangle_{\mathcal{H}},\quad g\in G\,.
\end{equation}
These samples can be expressed as the output of a convolution system. Indeed, for any $\displaystyle{f= \sum_{n=1}^N\sum_{g\in G} x_{n}(g) U(g)\varphi_{n}}$ in 
$\mathcal{V}_\Phi$, for each $m=1,2, \dots,M$ one immediately gets
\begin{equation}
\label{sampconvol}
\mathcal{L}_{m}f(g)=\sum_{n=1}^N (a_{m,n}\ast x_{n})(g),\,\,\, g\in G\,,
\end{equation}
with $a_{m,n}(g)=\langle \varphi_{n}, U(g)\psi_{m} \rangle_{\mathcal{H}}$\,, $g\in G$. Notice that each $a_{m,n}$ belongs to $\ell^2(G)$ since the sequence 
$\{U(g)\varphi_{n}\}_{g\in G;\,n=1, 2,\dots,N}$ is, in particular, a Bessel sequence in $\mathcal{H}$.

\medskip

\noindent $\bullet$ Suppose now that $\mathcal{H}=L^2(\mathbb{R}^d)$  and consider $M$ fixed points $t_m\in \mathbb{R}^d$, \, $ m=1,2, \dots, M$.  For each $f\in \mathcal{V}_{\Phi}$ we define formally its samples, for $m=1,2, \dots,M$, as
\begin{equation}
\label{samples2}
\mathcal{L}_{m}f(g):=\big[U(-g)f \big](t_m),\quad g\in G\,.
\end{equation}
It is straightforward to check that, for $\displaystyle{f= \sum_{n=1}^N\sum_{g\in G} x_{n}(g) U(g)\varphi_n}$ in $\mathcal{V}_{\Phi}$,  expression \ref{sampconvol} holds for  $a_{m,n}(g)=\big[U(-g)\varphi_n \big](t_m)$,\,\, $g\in G$. Under mild hypotheses (see Section \ref{subsection4-3}) one can obtain that $\mathcal{V}_\Phi$ is a reproducing kernel Hilbert space of continuous functions in $L^2(\mathbb{R}^d)$ where samples \eqref{samples2} are well defined with corresponding $a_{m,n}\in \ell^2(G)$, and yielding pointwise sampling in $\mathcal{V}_\Phi$.

\medskip

The above two situations englobe most of the regular (average or pointwise) sampling appearing in mathematical or engineering literature as we will see in Section \ref{section4}.

\bigskip

Consequently, one can think of a sampling process in  subspace $\mathcal{V}_\Phi$ as $M$ expressions like \eqref{sampconvol}, i.e., a convolution system $\mathcal{A}$ defined in the product Hilbert space $\ell^2_{_N}(G):=\ell^2(G)\times \dots \times \ell^2(G)$ ($N$ times) by means of a matrix 
$A=[a_{m,n}] \in \mathcal{M}_{_{M\times N}}(\ell^2(G))$, i.e., a $M\times N$ matrix  with entries in $\ell^2(G)$, as
\[
\mathcal{A}(\mathbf{x})= A  \ast \mathbf{x}\,,\quad \mathbf{x}=(x_1, x_2, \dots, x_N)^\top \in \ell^2_{_N}(G)\,,
\]
where $A  \ast \mathbf{x}$ denotes the (matrix) convolution
\[
(A  \ast \mathbf{x})(g)=\sum_{g'\in G} A(g-g')\, \mathbf{x}(g'),\quad g\in G\,.
\]
Note that the $m$-th entry of \,$A  \ast \mathbf{x}$\, is\, $\sum_{n=1}^N (a_{m,n}\ast x_{n})$, where $x_{n}$ denotes the $n$-th entry of $\mathbf{x} \in \ell^2_{_N}(G)$.

\medskip

The main aim in this paper is to recover, in a stable way, any $\mathbf{x} \in \ell^2_{_N}(G)$, or equivalently, the corresponding $\displaystyle{f= \sum_{n=1}^N\sum_{g\in G} x_{n}(g) U(g)\varphi_n}\in \mathcal{V}_\Phi$, from the vector data 
\[
\boldsymbol{\mathcal{L}}f(g):=\big(\mathcal{L}_1f(g), \mathcal{L}_2 f(g), \dots, \mathcal{L}_M f(g)\big)^\top=\big[\mathcal{A}(\mathbf{x})\big](g)\,, \quad g\in G\,,
\]
i.e., from the output $\mathcal{A}(\mathbf{x})$ of the convolution system $\mathcal{A}$ with associated matrix $A$, in case the vector sampling 
$\boldsymbol{\mathcal{L}}f \in \ell^2_{_M}(G)$.
\subsection{A brief on harmonic analysis on discrete abelian groups}
Let $(G, +)$ be a countable discrete abelian group and let $\mathbb{T}=\{z\in \mathbb{C}: |z|=1\}$ be the unidimensional torus. We say that $\xi:G\mapsto \mathbb{T}$ is a character of $G$ if $\xi(g+g')=\xi(g)\xi(g')$  for all $g,g'\in G$. We denote $\xi(g)=\langle g,\xi \rangle$.  By defining $(\xi+\xi')(g)=\xi(g)\xi'(g)$,  the set of characters $\widehat{G}$ is a group, called the dual group of $G$; since $G$ is discrete, the group 
$\widehat{G}$ is compact \cite[Prop. 4.4]{folland:95}. In particular, it is known that $\widehat{\mathbb{Z}}\cong \mathbb{T}$, with $\langle n,z \rangle = z^n$, and
$\widehat{\mathbb{Z}}_s\cong \mathbb{Z}_s:=\mathbb{Z}/s\mathbb{Z}$, with $\langle n,m \rangle = W_s^{nm}$, where $W_s=e^{2\pi i/s}$.

There exists a unique  measure, the Haar measure $\mu$ on  $\widehat{G}$ satisfying  $\mu(\xi+ E)=\mu(E)$,  for every Borel set $E\subset \widehat{G}$, and 
$\mu(\widehat{G})=1$. We denote 
$\int_{\widehat{G}} X(\xi) d\xi=\int_{\widehat{G}} X(\xi) d\mu(\xi)$. \\
If $G=\mathbb{Z}$, 
\[
\int_{\widehat{G}} X(\xi) d\xi=\int_{\mathbb{T}} X(z) dz= \frac{1}{2\pi}\int_0^{2\pi} X(e^{iw}) dw\,,
\]
and if $G=\mathbb{Z}_s$, 
\[
\int_{\widehat{G}} X(\xi) d\xi=\int_{\mathbb{Z}_s} X(n) dn= \frac{1}{s}\sum_{n\in \mathbb{Z}_s} X(n)\,.
\]
If $G_1, G_2,\ldots ,G_d$ are abelian discrete groups then the dual group of the product group is 
$\big(G_1 \times G_2 \times \ldots \times G_d\big)^{\wedge}\cong \widehat{G}_1 \times \widehat{G}_2 \times \ldots \times \widehat{G}_d$ with
\[
\big\langle\, (g_1,g_2,\ldots,g_d)\, ,\, (\xi_1,\xi_2\ldots,\xi_d)\, \big\rangle = \langle g_1,\xi_1\rangle \langle g_2,\xi_2\rangle\cdots \langle g_d,\xi_d\rangle\,.
\]
For $x\in \ell^1(G)$  its {\em Fourier transform} is defined as
\[
X(\xi)=\widehat{x}(\xi):=\sum_{g\in G}x(g) \overline{\langle g,\xi \rangle}=\sum_{g\in G}x(g) \langle -g,\xi \rangle\,,\quad \xi\in \widehat{G}\,.
\]
The Plancherel theorem extends uniquely the Fourier transform on $\ell^1(G)\cap \ell^2(G)$ to a unitary isomorphism from $\ell^2(G)$ to $L^2(\widehat{G})$. For the details see, for instance, Ref.~\cite{folland:95}.

\section{Convolution systems on discrete abelian groups}
\label{section3}
This section is devoted to collect some known results on discrete convolution systems, and to prove the new ones needed in the sequel. We will consider  bounded operators $\mathcal{A}:\ell^2_{_N}(G)\to \ell^2_{_M}(G)$ expressed as $\mathcal{A}(\mathbf{x})=A  \ast \mathbf{x}$ for each $\mathbf{x} \in \ell^2_{_N}(G)$.
For a fixed $g\in G$ we denote, as usually, the translation by $g$ of any $\mathbf{x} \in \ell^2_{_N}(G)$ as $T_{g}\mathbf{x}(h)=\mathbf{x}(h-g)$, $h\in G$. The first two results can be found in 
\cite[Thms. 2-3]{gerardo:19}

 \begin{prop}
 \label{bessel}  
 Given $A\in  \mathcal{M}_{_{M\times N}}\big(\ell^2(G)\big)$, the operator $\mathcal{A}: \mathbf{x} \mapsto A  \ast \mathbf{x}$ is a well defined bounded operator from 
 $\ell^2_{_N}(G)$ into $\ell^2_{_M}(G)$ if and only if  $\widehat{A}\in \mathcal{M}_{_{M\times N}}\big(L^\infty(\widehat{G})\big)$, where 
 \[
 \widehat{A}(\xi):=\big[\widehat{a}_{m,n}(\xi)\big]\,, \quad \text{a.e. $\xi \in \widehat{G}$}\,. 
 \] 
 \end{prop}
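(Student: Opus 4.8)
The plan is to diagonalize the convolution operator by passing to the Fourier domain, where it becomes pointwise matrix multiplication, and then to characterize boundedness of that multiplication operator. First I would invoke the Plancherel theorem recalled at the end of Section \ref{section2}: the Fourier transform is a unitary isomorphism $\ell^2(G) \to L^2(\widehat{G})$, and applying it componentwise yields unitary isomorphisms $\mathcal{F}_N : \ell^2_{_N}(G) \to L^2_{_N}(\widehat{G})$ and $\mathcal{F}_M : \ell^2_{_M}(G) \to L^2_{_M}(\widehat{G})$, where $L^2_{_K}(\widehat{G})$ denotes the $K$-fold product. The convolution theorem then gives $\widehat{A \ast \mathbf{x}} = \widehat{A}\,\widehat{\mathbf{x}}$ (a pointwise matrix--vector product a.e.\ on $\widehat{G}$), so that $\mathcal{F}_M \,\mathcal{A}\, \mathcal{F}_N^{-1}$ is the multiplication operator $M_{\widehat{A}} : \mathbf{X} \mapsto \widehat{A}(\cdot)\,\mathbf{X}(\cdot)$. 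Since $\mathcal{F}_N$ and $\mathcal{F}_M$ are unitary, $\mathcal{A}$ is bounded from $\ell^2_{_N}(G)$ to $\ell^2_{_M}(G)$ if and only if $M_{\widehat{A}}$ is bounded from $L^2_{_N}(\widehat{G})$ to $L^2_{_M}(\widehat{G})$, so the problem reduces to a statement about multiplication operators.

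The sufficiency direction is then direct: if each $\widehat{a}_{m,n} \in L^\infty(\widehat{G})$, then $\esup_{\xi}\|\widehat{A}(\xi)\|_{\mathrm{op}}$ is finite, because for finite $M,N$ one has $\|\widehat{A}(\xi)\|_{\mathrm{op}} \le \sqrt{MN}\,\max_{m,n}|\widehat{a}_{m,n}(\xi)|$, and the pointwise estimate $\|\widehat{A}(\xi)\mathbf{X}(\xi)\|_{\mathbb{C}^M} \le \big(\esup_{\eta}\|\widehat{A}(\eta)\|_{\mathrm{op}}\big)\,\|\mathbf{X}(\xi)\|_{\mathbb{C}^N}$ integrates over $\widehat{G}$ to give boundedness of $M_{\widehat{A}}$ with the same bound on its norm.

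The more delicate, ``only if'' direction requires showing that boundedness of $M_{\widehat{A}}$ forces $\widehat{A} \in L^\infty$, and this is where I expect the main work. I would argue by contraposition. Since a matrix entry is dominated by the operator norm, $|\widehat{a}_{m,n}(\xi)| \le \|\widehat{A}(\xi)\|_{\mathrm{op}}$, failure of essential boundedness of some entry forces the set $E_k := \{\xi : \|\widehat{A}(\xi)\|_{\mathrm{op}} > k\}$ to have positive Haar measure for every $k$. On $E_k$ a measurable field of unit vectors $v(\xi) \in \mathbb{C}^N$ with $\|\widehat{A}(\xi)v(\xi)\|_{\mathbb{C}^M} > k$ lets me build a test element $\mathbf{X}_k := v\,\chi_{E_k}/\sqrt{\mu(E_k)} \in L^2_{_N}(\widehat{G})$ of unit norm with $\|M_{\widehat{A}}\mathbf{X}_k\| > k$, contradicting boundedness. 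The main obstacle is precisely this measurable-selection step---ensuring $\xi \mapsto v(\xi)$ can be chosen measurably so that $\mathbf{X}_k$ is a genuine element of $L^2_{_N}(\widehat{G})$; this is standard, since the entries of $\widehat{A}$ are measurable, $\xi \mapsto \|\widehat{A}(\xi)\|_{\mathrm{op}}$ is a measurable function, and a normalized approximate maximizer can be selected measurably, but it is the only point that goes beyond routine estimates. Finally, the equivalence between ``$\widehat{A}\in L^\infty$ in operator norm'' and ``each entry $\widehat{a}_{m,n}\in L^\infty$'' follows from the two-sided comparison of the operator norm with the entrywise maximum in fixed finite dimension, which closes the argument.
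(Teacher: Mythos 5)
Your argument is correct, and the main thing to know is that the paper itself contains no proof of Prop.~\ref{bessel} to compare against: the result is imported wholesale from \cite[Thms.~2--3]{gerardo:19}. So what you have written is a genuinely self-contained proof, and the route you take---Plancherel conjugation turning $\mathcal{A}$ into the matrix multiplication operator $M_{\widehat{A}}:\mathbf{X}\mapsto \widehat{A}(\cdot)\mathbf{X}(\cdot)$, followed by the characterization of bounded multiplication operators---is the natural one, presumably close to what the cited reference does. Two refinements are worth making. First, the measurable-selection step that you single out as the main obstacle can be avoided entirely: since $|\widehat{a}_{m,n}(\xi)|=|\langle \widehat{A}(\xi)e_n,e_m\rangle_{\mathbb{C}^M}|\le \|\widehat{A}(\xi)\|_2$, essential unboundedness of $\|\widehat{A}(\cdot)\|_2$ forces essential unboundedness of some single entry $\widehat{a}_{m,n}$ (by your two-sided comparison), and then you can test with a \emph{constant} direction: put $F_k:=\{\xi\in\widehat{G}: |\widehat{a}_{m,n}(\xi)|>k\}$, which has positive Haar measure for every $k$, and take $\mathbf{X}_k:=e_n\,\chi_{F_k}/\sqrt{\mu(F_k)}$; already the $m$-th component of $\widehat{A}\,\mathbf{X}_k$ has $L^2$-norm exceeding $k$. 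No measurable field of unit vectors is needed.

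Second, the one point you gloss over is exactly where the words ``well defined'' in the statement carry content. For $A$ with entries merely in $\ell^2(G)$ and $\mathbf{x}\in \ell^2_{_N}(G)$, the convolution $A\ast\mathbf{x}$ is a priori only a \emph{bounded} sequence (Cauchy--Schwarz), so one may not write $\widehat{A\ast\mathbf{x}}$, nor assert that $\mathcal{F}_M\,\mathcal{A}\,\mathcal{F}_N^{-1}=M_{\widehat{A}}$, before knowing that $A\ast\mathbf{x}\in\ell^2_{_M}(G)$---which in the ``if'' direction is part of what must be proved. The precise form of the convolution theorem to invoke is: $(a_{m,n}\ast x_n)(g)$ equals the $g$-th Fourier coefficient of the function $\widehat{a}_{m,n}\,\widehat{x}_n$, which lies in $L^1(\widehat{G})$ because $\mu(\widehat{G})=1$; hence, by uniqueness of Fourier coefficients of $L^1$-functions on the compact group $\widehat{G}$, one has $A\ast\mathbf{x}\in\ell^2_{_M}(G)$ if and only if $\widehat{A}\,\widehat{\mathbf{x}}\in L^2_{_M}(\widehat{G})$, and in that case $\widehat{A\ast\mathbf{x}}=\widehat{A}\,\widehat{\mathbf{x}}$. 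With this lemma in place, ``$\mathcal{A}$ well defined and bounded'' is exactly equivalent to ``$M_{\widehat{A}}$ maps $L^2_{_N}(\widehat{G})$ boundedly into $L^2_{_M}(\widehat{G})$,'' and both directions of your argument, including the transport of norms $\|\mathcal{A}\|=\|M_{\widehat{A}}\|=\esup_{\xi\in\widehat{G}}\|\widehat{A}(\xi)\|_2$, go through as you describe.
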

  
 \begin{prop}
 \label{convolution}  
 For a linear operator $\mathcal{A}:\ell^2_{_N}(G)\to \ell^2_{_M}(G)$ the following conditions are equivalent:
 \begin{itemize}
 \item[(a)] $\mathcal{A}$  is a bounded operator that conmutes with translations, i.e., $\mathcal{A}T_{g}=T_{g}\mathcal{A}$, for all $g\in G$.
  \item[(b)] There exists a matrix $A\in  \mathcal{M}_{_{M\times N}}\big(\ell^2(G)\big)$ that satisfies $\widehat{A} \in \mathcal{M}_{_{M\times N}}\big(L^\infty(\widehat{G})\big)$ and such that $\mathcal{A}(\mathbf{x})=A\ast \mathbf{x}$ for each $\mathbf{x}\in \ell^2_{_N}(G)$.
  \item[(c)]  There exists a matrix $\Lambda \in \mathcal{M}_{_{M\times N}}\big(L^\infty(\widehat{G})\big)$ such that $\widehat{\mathcal{A}(\mathbf{x})}=\Lambda \cdot \widehat{\mathbf{x}}$ for each $\mathbf{x}\in \ell^2_{_N}(G)$.
  \end{itemize}
  The matrices $A$ and $\Lambda$ satisfying (b) and (c) are unique and satisfy $\Lambda=\widehat{A}$.
  \end{prop}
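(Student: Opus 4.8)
The plan is to transport the whole problem to the Fourier side via the Plancherel isomorphism $\mathcal{F}:\ell^2(G)\to L^2(\widehat{G})$ (applied componentwise, giving unitaries $\ell^2_{_N}(G)\to L^2_{_N}(\widehat{G})$ and $\ell^2_{_M}(G)\to L^2_{_M}(\widehat{G})$), under which convolution becomes pointwise matrix multiplication and the translation $T_g$ becomes multiplication by the character $e_g(\xi):=\overline{\langle g,\xi\rangle}$, since a direct computation gives $\widehat{T_g\mathbf{x}}=e_g\,\widehat{\mathbf{x}}$. With this dictionary the equivalence $(b)\Leftrightarrow(c)$ is just the convolution theorem: if $\mathcal{A}(\mathbf{x})=A\ast\mathbf{x}$ then $\widehat{\mathcal{A}(\mathbf{x})}=\widehat{A}\cdot\widehat{\mathbf{x}}$, so $(c)$ holds with $\Lambda=\widehat{A}$; conversely, because $\widehat{G}$ is compact with $\mu(\widehat{G})=1$ we have $L^\infty(\widehat{G})\subset L^2(\widehat{G})$, hence each entry of a given $\Lambda$ lies in $L^2(\widehat{G})$ and has an inverse Fourier transform in $\ell^2(G)$; this produces $A\in\mathcal{M}_{_{M\times N}}(\ell^2(G))$ with $\widehat{A}=\Lambda\in\mathcal{M}_{_{M\times N}}(L^\infty(\widehat{G}))$, and Proposition \ref{bessel} then guarantees that $\mathbf{x}\mapsto A\ast\mathbf{x}$ is the bounded operator $\mathcal{A}$. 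For $(b)\Rightarrow(a)$ I would invoke Proposition \ref{bessel} for boundedness and then observe that, in the Fourier domain, $\widehat{A}\cdot(e_g\,\widehat{\mathbf{x}})=e_g\,(\widehat{A}\cdot\widehat{\mathbf{x}})$; injectivity of $\mathcal{F}$ turns this scalar commutation into $\mathcal{A}T_g=T_g\mathcal{A}$.

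The substantive direction is $(a)\Rightarrow(c)$. I would set $\widehat{\mathcal{A}}:=\mathcal{F}\,\mathcal{A}\,\mathcal{F}^{-1}$, a bounded operator on the $L^2(\widehat{G})$-valued vectors, for which the hypothesis $\mathcal{A}T_g=T_g\mathcal{A}$ reads: $\widehat{\mathcal{A}}$ commutes with multiplication $M_{e_g}$ by every character $e_g$. Since $\overline{e_g}=e_{-g}$ and the characters separate the points of $\widehat{G}$, their linear span is a conjugation-closed point-separating subalgebra of $C(\widehat{G})$ containing the constants, so by Stone--Weierstrass it is uniformly dense in $C(\widehat{G})$; boundedness of $\widehat{\mathcal{A}}$ then upgrades the commutation to $\widehat{\mathcal{A}}\,M_\phi=M_\phi\,\widehat{\mathcal{A}}$ for every $\phi\in C(\widehat{G})$.

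With the continuous multipliers in hand I would manufacture the symbol $\Lambda$ column by column. Let $\mathbf{1}_j\in L^2_{_N}(\widehat{G})$ be the vector carrying the constant function $1$ in the $j$-th slot and $0$ elsewhere (a legitimate $L^2$ element because $\mu(\widehat{G})=1$), and define the $j$-th column of $\Lambda$ to be $\widehat{\mathcal{A}}(\mathbf{1}_j)$, which a priori only has entries in $L^2(\widehat{G})$. The module identity $\widehat{\mathcal{A}}(\phi\,\mathbf{1}_j)=\phi\,\widehat{\mathcal{A}}(\mathbf{1}_j)$ for $\phi\in C(\widehat{G})$ combined with boundedness yields $\|\phi\,\Lambda_{\cdot,j}\|_{L^2_{_M}}\le\|\widehat{\mathcal{A}}\|\,\|\phi\|_{L^2}$. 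The key---and only delicate---step is to deduce from this that $\Lambda_{\cdot,j}\in L^\infty(\widehat{G})$ with $\|\Lambda_{\cdot,j}\|_\infty\le\|\widehat{\mathcal{A}}\|$: this is the standard characterization of bounded $L^2$-multipliers, proved by testing the inequality against continuous approximants of the indicators of the level sets $\{\xi:|\Lambda_{\cdot,j}(\xi)|>C\}$ and letting $C\uparrow\|\widehat{\mathcal{A}}\|$. Once $\Lambda\in\mathcal{M}_{_{M\times N}}(L^\infty(\widehat{G}))$ is established, multiplication by $\Lambda$ is a bounded map $L^2_{_N}(\widehat{G})\to L^2_{_M}(\widehat{G})$ that agrees with $\widehat{\mathcal{A}}$ on the dense subspace of vectors with continuous components, hence on all of $L^2_{_N}(\widehat{G})$, which is exactly $(c)$.

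Finally, uniqueness of $\Lambda$ follows by feeding the constant vectors $\mathbf{1}_j$ into $(c)$, and uniqueness of $A$ together with the relation $\Lambda=\widehat{A}$ follows from the injectivity of the Fourier transform and the already established $(b)\Leftrightarrow(c)$. I expect the main obstacle to be precisely the passage from the operator commutation hypothesis to a genuine $L^\infty$ symbol: the Stone--Weierstrass reduction to continuous multipliers followed by the level-set argument identifying bounded $L^2$-multipliers with essentially bounded functions is where the real content lies, the remaining implications being routine bookkeeping with Plancherel and the convolution theorem.
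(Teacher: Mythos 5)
Your proof is correct, but there is no in-paper argument to compare it with: the paper does not prove Prop.~\ref{convolution} at all; it quotes it, together with Prop.~\ref{bessel}, from \cite[Thms.~2--3]{gerardo:19}. What you have written is therefore a self-contained replacement for that citation, following the standard vector-valued Fourier multiplier theorem, and it holds up. The dictionary $\widehat{T_g\mathbf{x}}=e_g\widehat{\mathbf{x}}$ is right; (b)$\Leftrightarrow$(c) and (b)$\Rightarrow$(a) are routine given Prop.~\ref{bessel} (legitimate to invoke, since it is itself quoted and logically prior); and you correctly locate the only substantive step in (a)$\Rightarrow$(c). That step is sound as sketched: the span of the characters is a self-adjoint unital point-separating subalgebra of $C(\widehat{G})$ (separation of points is Pontryagin duality), uniform convergence of symbols gives operator-norm convergence of multiplication operators so the commutation survives the Stone--Weierstrass closure, and the level-set argument works because Haar measure on the compact group $\widehat{G}$ is regular: approximating $\chi_E$ by continuous $0\le\phi_k\le 1$ in $L^2$ and a.e., Fatou turns $\|\phi\,\Lambda_{\cdot,j}\|_2\le\|\widehat{\mathcal{A}}\|\,\|\phi\|_2$ into $\int_E|\Lambda_{\cdot,j}|^2\,d\mu\le\|\widehat{\mathcal{A}}\|^2\mu(E)$ for every Borel $E$, so each level set $\{|\Lambda_{\cdot,j}|>C\}$ with $C>\|\widehat{\mathcal{A}}\|$ is null (your limit should run $C\downarrow\|\widehat{\mathcal{A}}\|$, not upward --- an immaterial slip). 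Two small points deserve an explicit sentence in a final write-up. First, for $a,x\in\ell^2(G)$ the convolution $a\ast x$ is defined pointwise but need not lie in $\ell^2(G)$, so the identity $\widehat{a\ast x}=\widehat{a}\,\widehat{x}$ in $L^2(\widehat{G})$ is precisely what the hypothesis $\widehat{A}\in\mathcal{M}_{M\times N}(L^\infty(\widehat{G}))$ buys; you use this tacitly in (b)$\Rightarrow$(c). Second, in (c)$\Rightarrow$(b), Prop.~\ref{bessel} only gives boundedness of $\mathbf{x}\mapsto A\ast\mathbf{x}$; the identification of this operator with the given $\mathcal{A}$ needs the chain $\widehat{A\ast\mathbf{x}}=\Lambda\cdot\widehat{\mathbf{x}}=\widehat{\mathcal{A}(\mathbf{x})}$ plus injectivity of the Fourier transform, which is worth stating rather than leaving implicit.
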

  
\medskip

Under equivalent conditions in Prop.~\ref{convolution}, we say that $\mathcal{A}$ is a {\em bounded convolution operator}, and the unique matrix $\widehat{A}$ which satisfies $\widehat{\mathcal{A}(\mathbf{x})}=\widehat{A} \cdot \widehat{\mathbf{x}}$ for each 
$\mathbf{x}\in \ell^2_{_N}(G)$, is called the {\em transfer matrix} of the operator $\mathcal{A}$.

\medskip

\begin{prop}
\label{sobre} 
Let $\mathcal{A}: \ell^2_{_N}(G)\rightarrow \ell^2_{_M}(G)$ be a bounded convolution operator with transfer matrix $\widehat{A}$. Then: 

\begin{enumerate}[(a)]
\item The adjoint operator $\mathcal{A}^*$ is a bounded convolution operator with transfer matrix $\widehat{A}^*$, the adjoint matrix (transpose conjugate) of 
$\widehat{A}$, i.e., 
$\widehat{A}^*(\xi)=[\widehat{A}(\xi)]^*$, a.e. $\xi \in \widehat{G}$ (in the sequel $\widehat{A}(\xi)^*$). 

\item If $\mathcal{B}: \ell^2_{_M}(G)\rightarrow \ell^2_{_K}(G)$ is other bounded convolution operator with transfer matrix $\widehat{B}$ then the composition 
$\mathcal{B}\mathcal{A}: \ell^2_{_N}(G)\rightarrow \ell^2_{_K}(G)$ is a bounded convolution operator with transfer matrix $\widehat{B} \cdot \widehat{A}$.

\item $\displaystyle{\|\mathcal{A}\|=\esup_{\xi\in \widehat{G}} \| \widehat{A}(\xi)\|_{2}}$, where $\|\cdot\|_2$ denotes the spectral norm of the matrix.

\item $\mathcal{A}$ is injective with a closed range if and only if  $\einf_{\xi\in \widehat{G}} \det[\widehat{A}(\xi)^*\widehat{A}(\xi)]>0$.

\item $\mathcal{A}$ is onto if and only if $\einf_{\xi\in \widehat{G}} \det[\widehat{A}(\xi)\widehat{A}(\xi)^*]>0$.

\item $\mathcal{A}$ is an isomorphism if and only if  $M=N$ and $\einf_{\xi\in \widehat{G}} |\det\widehat{A}(\xi)|>0$. In this case,  
$\mathcal{A}^{-1}$ is a bounded convolution operator with transfer matrix $(\widehat{A}\,)^{-1}$ and
\[
\|\mathcal{A}^{-1}\|=\big(\einf_{\xi\in \widehat{G}} \lambda_{\min} [\widehat{A}(\xi)^*\widehat{A}(\xi)]\big)^{-1/2}.
\]
 \end{enumerate}
\end{prop}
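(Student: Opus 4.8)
The unifying device is the Plancherel isomorphism. Writing $\mathcal{F}:\ell^2_{_N}(G)\to L^2_{_N}(\widehat{G})$ for the componentwise Fourier transform (a unitary isomorphism by the Plancherel theorem), Proposition \ref{convolution}(c) says precisely that $\mathcal{F}\,\mathcal{A}\,\mathcal{F}^{-1}$ is the operator of pointwise multiplication by the matrix $\widehat{A}(\xi)$ on $L^2_{_M}(\widehat{G})$; that is, $\mathcal{A}$ is unitarily equivalent to a decomposable (multiplication) operator whose fibres are the matrices $\widehat{A}(\xi)$. The whole proposition then reduces to pointwise linear algebra on these fibres together with routine essential-supremum/infimum arguments. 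Parts (a) and (b) are immediate from this picture: since the translations $T_g$ are unitary and commute with $\mathcal{A}$, they commute with $\mathcal{A}^*$, so $\mathcal{A}^*$ is again a convolution operator by Proposition \ref{convolution}; and computing $\langle \mathcal{A}\mathbf{x},\mathbf{y}\rangle=\int_{\widehat{G}}\langle \widehat{A}(\xi)\widehat{\mathbf{x}}(\xi),\widehat{\mathbf{y}}(\xi)\rangle\,d\xi$ and moving $\widehat{A}(\xi)$ across the inner product identifies the transfer matrix of $\mathcal{A}^*$ as $\xi\mapsto\widehat{A}(\xi)^*$. Likewise $\widehat{\mathcal{B}\mathcal{A}(\mathbf{x})}=\widehat{B}\,\widehat{A}\,\widehat{\mathbf{x}}$, and the product of two $L^\infty$ matrices of compatible sizes is again $L^\infty$, giving (b).

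For (c) I would prove the two inequalities separately. The bound $\|\mathcal{A}\|\le\esup_{\xi}\|\widehat{A}(\xi)\|_2$ is immediate from $\|\widehat{A}(\xi)\widehat{\mathbf{x}}(\xi)\|\le\|\widehat{A}(\xi)\|_2\,\|\widehat{\mathbf{x}}(\xi)\|$ integrated over $\widehat{G}$. The reverse inequality is the technical core: for each $\varepsilon>0$ the set $E$ where $\|\widehat{A}(\xi)\|_2>\esup_{\xi}\|\widehat{A}(\xi)\|_2-\varepsilon$ has positive measure, and choosing $\widehat{\mathbf{x}}(\xi)$ to be a unit top right-singular vector of $\widehat{A}(\xi)$ supported on $E$ (normalised so that $\mathbf{x}$ has unit norm) forces $\|\mathcal{A}\mathbf{x}\|$ to exceed $\esup_{\xi}\|\widehat{A}(\xi)\|_2-\varepsilon$. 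The only delicate point is that this singular vector must be chosen \emph{measurably} in $\xi$; this is a standard measurable-selection fact for the finite matrices $\widehat{A}(\xi)$, and I regard it as the main obstacle of the whole proposition.

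Parts (d) and (e) rest on (c). Since $\mathcal{A}$ is injective with closed range iff it is bounded below, i.e. $\|\mathcal{A}\mathbf{x}\|^2\ge c^2\|\mathbf{x}\|^2$ for some $c>0$, passing to the Fourier side shows this is equivalent to $\widehat{A}(\xi)^*\widehat{A}(\xi)\ge c^2 I_N$ a.e., that is $\einf_{\xi}\lambda_{\min}[\widehat{A}(\xi)^*\widehat{A}(\xi)]>0$. To convert this into the stated determinant condition I use that by (c) all eigenvalues of the positive semidefinite $N\times N$ matrix $\widehat{A}(\xi)^*\widehat{A}(\xi)$ lie in $[\lambda_{\min},\|\mathcal{A}\|^2]$, whence the elementary sandwich $\lambda_{\min}^{N}\le\det[\widehat{A}(\xi)^*\widehat{A}(\xi)]\le\|\mathcal{A}\|^{2(N-1)}\lambda_{\min}$ makes $\einf_{\xi}\det[\widehat{A}(\xi)^*\widehat{A}(\xi)]>0$ and $\einf_{\xi}\lambda_{\min}>0$ equivalent. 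Part (e) follows by duality: $\mathcal{A}$ is onto iff $\mathcal{A}^*$ is bounded below, and applying (d) to $\mathcal{A}^*$ (whose transfer matrix is $\widehat{A}^*$ by (a)) turns the condition into $\einf_{\xi}\det[\widehat{A}(\xi)\widehat{A}(\xi)^*]>0$.

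Finally (f) combines the previous two. An isomorphism is both injective with closed range and onto; if $M\neq N$ then one of the Gram matrices $\widehat{A}(\xi)^*\widehat{A}(\xi)$ or $\widehat{A}(\xi)\widehat{A}(\xi)^*$ is rank-deficient for every $\xi$, so its determinant vanishes identically and (d) or (e) fails, forcing $M=N$. When $M=N$ both determinant conditions equal $\einf_{\xi}|\det\widehat{A}(\xi)|^2$, giving the criterion $\einf_{\xi}|\det\widehat{A}(\xi)|>0$. Under it the fibre inverses $\widehat{A}(\xi)^{-1}$ exist a.e., depend measurably on $\xi$, and satisfy $\|\widehat{A}(\xi)^{-1}\|_2=\lambda_{\min}[\widehat{A}(\xi)^*\widehat{A}(\xi)]^{-1/2}$, which is essentially bounded; hence $\widehat{A}^{-1}\in\mathcal{M}_{_{N\times N}}\big(L^\infty(\widehat{G})\big)$ and Proposition \ref{convolution} identifies the associated multiplication operator as a bounded convolution operator inverting $\mathcal{A}$, with transfer matrix $(\widehat{A}\,)^{-1}$. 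The norm formula is then (c) applied to $\mathcal{A}^{-1}$, since $\esup_{\xi}\lambda_{\min}^{-1/2}=\big(\einf_{\xi}\lambda_{\min}[\widehat{A}(\xi)^*\widehat{A}(\xi)]\big)^{-1/2}$.
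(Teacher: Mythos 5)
Your proof is correct, but it is genuinely more self-contained than the paper's. The paper delegates the analytic core to external results: part (c) is obtained by the C$^*$-identity $\|\mathcal{A}\|^2=\|\mathcal{A}^*\mathcal{A}\|$, which reduces the rectangular case to the square case proved in the cited reference \cite[Cor.~6]{gerardo:19}; part (d) is obtained by reducing to invertibility of the square convolution operator $\mathcal{A}^*\mathcal{A}$ and citing \cite[Thm.~7]{gerardo:19}, which also supplies the inverse's transfer matrix and norm in (f). You instead prove these facts directly from the multiplication-operator picture: the two-sided norm estimate in (c) via measurable selection of top singular vectors on sets of positive measure, and the passage in (d) from the bounded-below condition $\einf_{\xi}\lambda_{\min}[\widehat{A}(\xi)^*\widehat{A}(\xi)]>0$ to the stated determinant condition via the sandwich $\lambda_{\min}^{N}\le\det[\widehat{A}(\xi)^*\widehat{A}(\xi)]\le\|\mathcal{A}\|^{2(N-1)}\lambda_{\min}$ (the same sandwich, run in reverse, is what makes your (f) norm formula and the essential boundedness of $\widehat{A}^{-1}$ work). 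Parts (a), (b) and (e) coincide with the paper's arguments. What your route buys is independence from the cited paper and an explicit mechanism (the sandwich) linking determinant and smallest-eigenvalue conditions, which the paper never spells out; what the paper's route buys is brevity and complete avoidance of the measurable-selection issue, which is swallowed by the citations. One remark on the step you single out as the main obstacle: the measurable selection of singular vectors, while indeed standard, can be sidestepped entirely by a cheaper device --- fix a countable dense set $\{v_j\}$ of unit vectors in $\mathbb{C}^N$, note that $E_\varepsilon=\bigcup_j\{\xi\in E_\varepsilon:\|\widehat{A}(\xi)v_j\|>s-2\varepsilon\}$, pick one $j$ for which the corresponding set has positive measure, and use the \emph{constant} vector $v_j$ there; the same trick handles the lower-eigenvalue localization needed in (d).
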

\begin{proof}
\begin{enumerate}[(a)]
\item Using Prop.~\ref{convolution},  for each $\mathbf{x}\in \ell^2_{N}(G)$ and $\mathbf{y}\in \ell^2_{M}(G)$ we have 
\[
\langle\widehat{\mathbf{x}},\widehat{\mathcal{A}^*\mathbf{y}}\rangle_{L^2_{N}(\widehat{G})}=
\langle \mathbf{x},\mathcal{A}^*\mathbf{y}\rangle_{\ell_{N}^2(G)}=
\langle \mathcal{A}\mathbf{x},\mathbf{y}\rangle_{\ell_{M}^2(G)}=
\langle\widehat{A}\cdot\widehat{\mathbf{x}},\widehat{\mathbf{y}}\rangle_{L^2_{M}(\widehat{G})}
=
\langle\widehat{\mathbf{x}},\widehat{A}^*\cdot\widehat{\mathbf{y}}\rangle_{L^2_{N}(\widehat{G})}\,.
\]
Hence $\widehat{\mathcal{A}^*\mathbf{y}}=\widehat{A}^*\cdot\widehat{\mathbf{y}}$ for all
$\mathbf{y}\in \ell^2_{N}(G)$, and the result follows from Prop.~\ref{convolution}.

\item For each $\mathbf{x}\in \ell_{N}^2(G)$ we have that $\widehat{\mathcal{B}\mathcal{A}(\mathbf{x})}=\widehat{B}\cdot \widehat{\mathcal{A}(\mathbf{x})}=\widehat{B}\cdot \widehat{A}\cdot\widehat{\mathbf{x}}$. Since the entries of $\widehat{A}$ and  $\widehat{B}$ belong to $L^\infty(\widehat{G})$, we get that 
$\widehat{B}\cdot \widehat{A}\in  \mathcal{M}_{_{K\times N}}(L^\infty(\widehat{G}))$, and the result follows from Prop.~\ref{convolution}.
\item The result is proved in \cite[Cor.~6]{gerardo:19} for the case $N=M$. Hence, we obtain
\[
\|\mathcal{A}\|^2=\|\mathcal{A^*A}\|=\esup_{\xi\in \widehat{G}} \| \widehat{A}(\xi)^*\widehat{A}(\xi)\|_{2}=
\esup_{\xi\in \widehat{G}} \| \widehat{A}(\xi)\|^2_{2}\,.
\] 
\item A bounded operator $\mathcal{A}$ between Hilbert spaces is injective with a closed range if and only if the operator $\mathcal{A}^*\mathcal{A}$ is invertible. 
By using (a) and (b), we have that  $\mathcal{A}^*\mathcal{A}$ is a  bounded convolution operator with transfer matrix $ \widehat{A}(\xi)^*\widehat{A}(\xi)$, and the result follows from \cite[Thm.~7]{gerardo:19}.
\item A bounded operator $\mathcal{A}$ is onto if and only if its adjoint operator $\mathcal{A}^*$  is injective with a closed range; from (a), the transfer matrix of $\mathcal{A}^*$ is 
$\widehat{A}^*$. Thus, the result follows from (d). 
\item This characterization is a consequence of (d) and (e). From \cite[Thm.~7]{gerardo:19}, the inverse operator $\mathcal{A}^{-1}$ is a  bounded convolution operator  with transfer matrix 
$\widehat{A}(\xi)^{-1}$ and norm $\|\mathcal{A}^{-1}\|=\big(\einf_{\xi\in \widehat{G}} \lambda_{\min} [\widehat{A}(\xi)^*\widehat{A}(\xi)]\big)^{-1/2}$ 
\end{enumerate}
\end{proof}

\medskip

Note that from (a) the matrix associated with the adjoint operator $\mathcal{A}^*$ is not the adjoint matrix of $A$, but the one defined by means of the involution 
 \begin{equation}
 \label{involution}
A^*=\big[a^*_{m,n}\big]^\top \in  \mathcal{M}_{_{N\times M}}\big(\ell^2(G)\big)\quad\text{where}\quad a^*_{m,n}(g):=\overline{a_{m,n}(-g)}, \quad g\in G.
\end{equation}
Indeed, since $\widehat{a^*_{m,n}}(\xi)=\overline{\widehat{a}_{m,n}(\xi)}$, we have $\widehat{A^*}(\xi)=\widehat{A}(\xi)^*=\widehat{A}^*(\xi)$, a.e. $\xi \in \widehat{G}$.

\subsection{Dual frames  in $\ell^2_{_N}(G)$ having the form $\{T_{g}\, \mathbf{b}_{m}\}_{g\in G;\, m=1,2,\dots,M}$}
\label{section3-1}
Given a matrix $B\in \mathcal{M}_{_{N\times M}}(\ell^2(G))$,  the associated convolution operator $\mathcal{B} : \ell^2_{_M}(G) \rightarrow \ell^2_{_N}(G)$  can be written in terms of its M columns $\mathbf{b}_{1}, \mathbf{b}_{2}\dots,\mathbf{b}_{M}$, as
\begin{equation}
\label{sintesis}
\mathcal{B}(\mathbf{x})= B \ast \mathbf{x}=
\sum_{m=1}^M\sum_{g\in G} x_m(g) T_{g}\mathbf{b}_{m}\,, \quad \mathbf{x} \in \ell^2_{_M}(G)\,,
\end{equation}
where $T_{g} \mathbf{b}_m = \mathbf{b}_m(\cdot-g)$ denotes the translation operator for each $m=1,2, \dots, M$.
In other words, operator $\mathcal{B}$ is the {\em synthesis operator} of the sequence $\big\{T_{g} \mathbf{b}_{m}\big\}_{g\in G;\, m=1,2,\dots,M}$ in $\ell^2_{_N}(G)$. 

\medskip

\noindent Thus Props.~\ref{bessel}, \ref{convolution} and \ref{sobre}  can be translated (interchanging $M$ by $N$)  to the associated sequence $\big\{T_{g} \mathbf{b}_{m}\big\}_{g\in G;\, m=1,2,\dots,M}$. For instance, since a sequence in a Hilbert space is a Bessel sequence if and only if its synthesis operator is bounded and, in this case, its optimal Bessel bound is the square of the synthesis operator norm \cite{ole:16}, from Props.~\ref{bessel} and \ref{sobre}  we get

\begin{prop}
\label{be}
The sequence $\big\{T_g \mathbf{b}_{m}\big\}_{g\in G;\, m=1,2,\dots,M}$  is a Bessel sequence for $\ell^2_{_N}(G)$ if and only if the transfer matrix $\widehat{B}$ belongs to  $\mathcal{M}_{_{N\times M}}\big(L^\infty(\widehat{G})\big)$. In this case the optimal Bessel bound is
$\beta_{_B}=\esup_{\xi\in \widehat{G}} \| \widehat{B}(\xi)\|^2_{2}$.   
\end{prop}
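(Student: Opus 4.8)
The plan is to identify the convolution operator $\mathcal{B}$ with the synthesis operator of the sequence $\big\{T_g \mathbf{b}_m\big\}_{g\in G;\, m=1,2,\dots,M}$ and then to invoke the two facts already assembled: the general frame-theoretic characterization of Bessel sequences, and the norm formula of Proposition~\ref{sobre}(c).

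First I would note that formula~\eqref{sintesis} shows precisely that $\mathcal{B}(\mathbf{x}) = B \ast \mathbf{x} = \sum_{m=1}^M \sum_{g\in G} x_m(g)\, T_g \mathbf{b}_m$, so $\mathcal{B}: \ell^2_{_M}(G) \to \ell^2_{_N}(G)$ is nothing but the synthesis operator of the family $\big\{T_g \mathbf{b}_m\big\}$. Since a sequence in a Hilbert space is Bessel if and only if its synthesis operator is everywhere defined and bounded, with optimal Bessel bound equal to the square of the operator norm, the sequence $\big\{T_g \mathbf{b}_m\big\}$ is Bessel for $\ell^2_{_N}(G)$ exactly when $\mathcal{B}$ is a well-defined bounded operator.

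Next I would apply Proposition~\ref{bessel}, read with the roles of $M$ and $N$ interchanged (here the matrix $B$ lives in $\mathcal{M}_{_{N\times M}}(\ell^2(G))$): the operator $\mathbf{x} \mapsto B \ast \mathbf{x}$ is a well-defined bounded operator from $\ell^2_{_M}(G)$ into $\ell^2_{_N}(G)$ if and only if $\widehat{B} \in \mathcal{M}_{_{N\times M}}(L^\infty(\widehat{G}))$. This gives the stated equivalence. To pin down the optimal bound, I would then use Proposition~\ref{sobre}(c), again with $M$ and $N$ interchanged, which yields $\|\mathcal{B}\| = \esup_{\xi\in\widehat{G}} \|\widehat{B}(\xi)\|_2$; squaring this identity and combining it with the frame-theoretic equality of the optimal Bessel bound and $\|\mathcal{B}\|^2$ gives $\beta_{_B} = \|\mathcal{B}\|^2 = \esup_{\xi\in\widehat{G}} \|\widehat{B}(\xi)\|_2^2$.

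There is essentially no hard step here: the statement is a bookkeeping translation of results already proved for convolution operators into the language of the associated translation-generated system, made possible by the identification in~\eqref{sintesis}. The only point requiring a moment of care is the consistent interchange of the indices $M$ and $N$, since Propositions~\ref{bessel} and~\ref{sobre} are phrased for operators $\ell^2_{_N}(G) \to \ell^2_{_M}(G)$, whereas the synthesis operator runs the other way, $\ell^2_{_M}(G) \to \ell^2_{_N}(G)$.
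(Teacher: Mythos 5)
Your proposal is correct and follows exactly the paper's own argument: the paper likewise identifies $\mathcal{B}$ via \eqref{sintesis} as the synthesis operator of $\{T_g\mathbf{b}_m\}_{g\in G;\,m=1,2,\dots,M}$, invokes the standard fact that a sequence is Bessel if and only if its synthesis operator is bounded with optimal bound equal to the squared operator norm, and then applies Props.~\ref{bessel} and \ref{sobre}(c) with the roles of $M$ and $N$ interchanged. No gaps; your note about the index interchange is precisely the only point of care the paper itself flags.
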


Let $\mathbf{a}^*_{m}$ denote the $m$-th column of the matrix $A^*$, the associated matrix  of  $\mathcal{A}^*$, given in \eqref{involution}. The convolution operator $\mathcal{A}:\ell_{N}^2(G)\to \ell_{M}^2(G)$ can also be written as
\begin{equation}
\label{analisis}
[\mathcal{A}(\mathbf{x})]_{m}(g)=[A\ast \mathbf{x}]_{m}(g)=\big\langle \mathbf{x}, T_{g}\mathbf{a}^*_{m} \big\rangle_{\ell^2_{_N}(G)}\,.
\end{equation}
In other words, operator $\mathcal{A}$ is the {\em analysis operator} for the sequence $\big\{T_{g} \mathbf{a}^*_{m}\big\}_{g\in G;\, m=1,2,\dots,M}$ in $\ell^2_{N}(G)$. 

\begin{prop}
\label{frame} 
Assume that $\widehat{A}\in \mathcal{M}_{_{M\times N}}(L^\infty(\widehat{G}))$. Then:
\begin{itemize}
\item[(a)] The sequence $\big\{T_g \mathbf{a}^*_{m}\big\}_{g\in G;\, m=1,2,\dots,M}$
 is a frame for $\ell^2_{_N}(G)$ if and only if 
\[\einf_{\xi\in \widehat{G}} \det \big[\widehat{A}(\xi)^*\widehat{A}(\xi)\big]>0.\] 
In this case, the optimal frame bounds are
\[
\alpha_{_{A}}=\einf_{\xi\in \widehat{G}} \lambda_{\min} [\widehat{A}(\xi)^*\widehat{A}(\xi)]\quad \text{and} \quad
\beta_{_{A}}=\esup_{\xi\in \widehat{G}} \lambda_{\max} [\widehat{A}(\xi)^*\widehat{A}(\xi)]\,.
\]
\item[(b)] The sequence $\big\{T_g \mathbf{a}^*_{m}\big\}_{g\in G;\, m=1,2,\dots,M}  $ is a Riesz basis for $\ell^2_{_N}(G)$ if and only if $N=M$ and
$\displaystyle\einf_{\xi\in \widehat{G}} \big|\det [\widehat{A}(\xi)]\big|>0$.
\end{itemize}
\end{prop}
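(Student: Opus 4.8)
The plan is to read off both statements from the operator-theoretic dictionary already established in Prop.~\ref{sobre} together with the standard correspondence between frames / Riesz bases and their analysis and synthesis operators. The starting point is the identity \eqref{analisis}, which exhibits $\mathcal{A}$ as the \emph{analysis} operator of the sequence $\big\{T_g \mathbf{a}^*_m\big\}_{g\in G;\,m=1,\dots,M}$ in $\ell^2_{N}(G)$; consequently its adjoint $\mathcal{A}^*$ is the corresponding \emph{synthesis} operator, and the frame operator is $S=\mathcal{A}^*\mathcal{A}$.

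For part (a), recall from frame theory (see \cite{ole:16}) that a sequence is a frame for $\ell^2_{N}(G)$ if and only if its analysis operator is bounded and bounded below: the upper inequality is exactly the Bessel condition, while the lower inequality says that $\mathcal{A}$ is injective with closed range (and, being a lower bound valid on all of $\ell^2_{N}(G)$, it automatically forces completeness). Under the running hypothesis $\widehat{A}\in\mathcal{M}_{_{M\times N}}(L^\infty(\widehat{G}))$ the operator $\mathcal{A}$ is bounded by Prop.~\ref{bessel}, so the frame property reduces to $\mathcal{A}$ being bounded below, which by Prop.~\ref{sobre}(d) is precisely $\einf_{\xi}\det[\widehat{A}(\xi)^*\widehat{A}(\xi)]>0$. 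This settles the equivalence.

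To identify the optimal bounds I would pass to the frame operator $S=\mathcal{A}^*\mathcal{A}$, which by Prop.~\ref{sobre}(a)--(b) is again a bounded convolution operator, now with the Hermitian positive semidefinite transfer matrix $\widehat{A}(\xi)^*\widehat{A}(\xi)$. The optimal frame bounds are the extremes of the spectrum of $S$, and for a convolution operator (equivalently, via Plancherel, a multiplication operator) these coincide with the essential infimum and supremum over $\xi$ of the eigenvalues of the transfer matrix, which is the content of \cite[Thm.~7]{gerardo:19}. Hence $\alpha_{_A}=\einf_{\xi}\lambda_{\min}[\widehat{A}(\xi)^*\widehat{A}(\xi)]$ and $\beta_{_A}=\esup_{\xi}\lambda_{\max}[\widehat{A}(\xi)^*\widehat{A}(\xi)]$; the latter can also be read off directly from $\beta_{_A}=\|\mathcal{A}\|^2$ and Prop.~\ref{sobre}(c), using $\|\widehat{A}(\xi)\|_{2}^{2}=\lambda_{\max}[\widehat{A}(\xi)^*\widehat{A}(\xi)]$.

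For part (b) I would invoke the characterization (again \cite{ole:16}) that a sequence is a Riesz basis for $\ell^2_{N}(G)$ if and only if its synthesis operator is a topological isomorphism onto $\ell^2_{N}(G)$. The synthesis operator here is $\mathcal{A}^*:\ell^2_{M}(G)\to\ell^2_{N}(G)$, which by Prop.~\ref{sobre}(a) is a bounded convolution operator with transfer matrix $\widehat{A}(\xi)^*$. Applying the isomorphism criterion Prop.~\ref{sobre}(f) to $\mathcal{A}^*$ gives that it is an isomorphism exactly when input and output dimensions agree, i.e.\ $N=M$, and $\einf_{\xi}|\det\widehat{A}(\xi)^*|>0$; since $|\det\widehat{A}(\xi)^*|=|\det\widehat{A}(\xi)|$ a.e., this is the stated condition. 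The only nonroutine point in the whole argument is the spectral identification of the optimal frame bounds in part (a), i.e.\ matching the spectrum of the frame operator with the pointwise eigenvalue data of its transfer matrix; everything else is a direct translation through Prop.~\ref{sobre} and the textbook frame / Riesz-basis dictionary.
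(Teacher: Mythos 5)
Your proposal is correct and follows essentially the same route as the paper: both reduce (a) to the fact that a Bessel sequence is a frame iff its analysis operator $\mathcal{A}$ is bounded below (Prop.~\ref{bessel} and Prop.~\ref{sobre}(d)), and both reduce (b) to the synthesis operator $\mathcal{A}^*$ being an isomorphism via Prop.~\ref{sobre}(f). The only cosmetic difference is in the optimal lower frame bound: you identify it with the bottom of the spectrum of the frame operator $\mathcal{A}^*\mathcal{A}$ viewed as a multiplication operator, while the paper computes $\alpha_{_A}=\|(\mathcal{A}^*\mathcal{A})^{-1}\|^{-1}$ from the explicit norm formula in Prop.~\ref{sobre}(f); these are the same calculation in different words.
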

\begin{proof}
$(a)$ Since $\widehat{A}\in \mathcal{M}_{_{M\times N}}(L^\infty(\widehat{G}))$, from Prop.~\ref{be}  the sequence $\big\{T_g \mathbf{a}^*_{m}\big\}_{g\in G;\, m=1,2,\dots,M}$, whose corresponding transfer matrix is $\widehat{A}^*$, is a Bessel sequence of $\ell^2_{_N}(G)$. Since a Bessel sequence is a frame  if and only if its analysis operator $\mathcal{A}$  is injective with a closed range (see, for instance, Ref. \cite{ole:16}), the result is a consequence of Prop.~\ref{sobre}(d). Since the optimal upper frame bound 
$\beta_{_A}$ is the squared norm of the analysis operator $\mathcal{A}$, and the optimal  lower frame bound $\alpha_{_{A}}$  is the reciprocal of the norm of  the inverse of the frame operator $\mathcal{A}^*\mathcal{A}$ (see, for instance, Ref.~\cite{ole:16}), from Prop.~\ref{sobre} we get 
\[
\begin{split}
\beta_{_{A}}&=\|\mathcal{A}\|^2=\esup_{\xi\in \widehat{G}} \|\widehat{A}(\xi)\|^2_{2}=\esup_{\xi\in \widehat{G}} \lambda_{\max} [\widehat{A}(\xi)^*\widehat{A}(\xi)],
\\
\alpha_{_{A}}&=\|(\mathcal{A}^*\mathcal{A})^{-1}\|^{-1}=
 \big(\einf_{\xi\in \widehat{G}} \lambda_{\text{min}} [\widehat{A}(\xi)^*\widehat{A}(\xi)\widehat{A}(\xi)^*\widehat{A}(\xi)]\big)^{1/2}
 =\einf_{\xi\in \widehat{G}} \lambda_{\text{min}} [\widehat{A}(\xi)^*\widehat{A}(\xi)]\,.
 \end{split}
 \]
$(b)$ The Bessel sequence $\big\{T_g \mathbf{a}^*_{m}\big\}_{g\in G;\, m=1,2,\dots,M}$ is a Riesz basis for $\ell^2_{_N}(G)$ if and only if its synthesis operator $\mathcal{A}^*$ is  an isomorphism (see, for instance, Ref.~\cite{ole:16}). Hence, the result is a consequence of Prop.~\ref{sobre}(f).
\end{proof}

\begin{prop}
\label{dualframe} 
Assume that $\widehat{A}\in \mathcal{M}_{_{M\times N}}(L^\infty(\widehat{G}))$ and $\widehat{B}\in \mathcal{M}_{_{N\times M}}(L^\infty(\widehat{G}))$. Then the sequences
 $\big\{T_g \mathbf{a}^*_{n}\big\}_{g\in G;\, n=1,2,\dots,M}$ and $\big\{T_g \mathbf{b}_{n}\big\}_{g\in G;\, n=1,2,\dots,M}$ form a pair of dual frames for $\ell^2_{_N}(G)$ if and only if \[
\widehat{B}(\xi)\,\widehat{A}(\xi)=I_{_N}\,,\quad \text{a.e. $\xi \in \widehat{G}$}\,.
 \]
\end{prop}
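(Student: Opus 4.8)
The plan is to reduce the dual-frame condition to the single operator identity $\mathcal{B}\mathcal{A}=\mathcal{I}_{\ell^2_{_N}(G)}$, and then to transfer this, via the uniqueness of transfer matrices, to the pointwise matrix identity $\widehat{B}\,\widehat{A}=I_{_N}$ a.e.

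First I would record that both sequences are Bessel sequences for $\ell^2_{_N}(G)$. Since $\widehat{A}\in\mathcal{M}_{_{M\times N}}(L^\infty(\widehat{G}))$ and $\widehat{B}\in\mathcal{M}_{_{N\times M}}(L^\infty(\widehat{G}))$, Prop.~\ref{be} applies directly to $\{T_g\mathbf{b}_n\}$ (whose transfer matrix is $\widehat{B}$); and, reading \eqref{analisis} together with Prop.~\ref{sobre}(a), it applies to $\{T_g\mathbf{a}^*_n\}$ as well (whose transfer matrix is $\widehat{A}^*$, also in $L^\infty$). Next I would invoke the standard frame-theoretic fact (Ref.~\cite{ole:16}): two Bessel sequences constitute a pair of dual frames if and only if the reconstruction formula $\mathbf{x}=\sum_{n=1}^M\sum_{g\in G}\langle\mathbf{x},T_g\mathbf{a}^*_n\rangle_{\ell^2_{_N}(G)}\,T_g\mathbf{b}_n$ holds for every $\mathbf{x}\in\ell^2_{_N}(G)$, in which case both are automatically frames. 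Using \eqref{analisis} and \eqref{sintesis}, the right-hand side of this formula is precisely $\mathcal{B}\big(\mathcal{A}\mathbf{x}\big)$, so the reconstruction identity is equivalent to the operator equality $\mathcal{B}\mathcal{A}=\mathcal{I}_{\ell^2_{_N}(G)}$.

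Then I would identify the transfer matrix of the composition. By Prop.~\ref{sobre}(b), $\mathcal{B}\mathcal{A}\colon\ell^2_{_N}(G)\to\ell^2_{_N}(G)$ is a bounded convolution operator with transfer matrix $\widehat{B}\cdot\widehat{A}\in\mathcal{M}_{_{N\times N}}(L^\infty(\widehat{G}))$. On the other hand, $\mathcal{I}_{\ell^2_{_N}(G)}$ is itself a bounded convolution operator whose transfer matrix is the constant matrix $I_{_N}$. Since Prop.~\ref{convolution} guarantees that the transfer matrix of a bounded convolution operator is unique, the operator equality $\mathcal{B}\mathcal{A}=\mathcal{I}_{\ell^2_{_N}(G)}$ holds if and only if $\widehat{B}(\xi)\,\widehat{A}(\xi)=I_{_N}$ for a.e. $\xi\in\widehat{G}$. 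Chaining these equivalences gives both implications of the statement.

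The only delicate point is the appeal to the Bessel-pair lemma: the equality $\mathcal{B}\mathcal{A}=\mathcal{I}$ for two merely Bessel sequences does not by itself certify that either one is a frame, so one must cite the result ensuring that a Bessel pair satisfying the reconstruction formula consists of two frames dual to one another (the lower frame bound of each is obtained from the Bessel bound of the other via Cauchy--Schwarz). Everything else is routine bookkeeping, the one thing to keep straight being the roles: $\{T_g\mathbf{a}^*_n\}$ enters through its \emph{analysis} operator $\mathcal{A}$, as recorded in \eqref{analisis}, while $\{T_g\mathbf{b}_n\}$ enters through its \emph{synthesis} operator $\mathcal{B}$, as recorded in \eqref{sintesis}.
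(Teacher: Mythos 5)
Your proposal is correct and follows essentially the same route as the paper's proof: both reduce duality of the two Bessel sequences to the operator identity $\mathcal{B}\mathcal{A}=\mathcal{I}_{\ell^2_{_N}(G)}$ via the Bessel-pair criterion in Ref.~\cite{ole:16}, and then translate this to $\widehat{B}\,\widehat{A}=I_{_N}$ a.e.\ using Prop.~\ref{sobre}(b). Your write-up merely makes explicit two points the paper leaves implicit (the verification that both sequences are Bessel via Prop.~\ref{be}, and the uniqueness of the transfer matrix from Prop.~\ref{convolution}), which is fine.
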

\begin{proof}
Having in mind that the analysis operator of $\big\{T_g \mathbf{a}^*_{n}\big\}_{g\in G;\, n=1,2,\dots,M}$  is $\mathcal{A}$ and that the synthesis operator of $\big\{T_g \mathbf{b}_{n}\big\}_{g\in G;\, n=1,2,\dots,M}$ is $\mathcal{B}$, we obtain that these two Bessel sequences form a pair of dual frames if and only if  $\mathcal{B}\mathcal{A}=\mathcal{I}_{\ell^2_{N}(G)}$ \cite[Lemma 6.3.2]{ole:16} or, equivalently, $\widehat{B}(\xi)\,\widehat{A}(\xi)=I_{_N}$, a.e. $\xi \in \widehat{G}$. (see Prop.~\ref{sobre}(b)).
\end{proof}
\section{The resulting sampling theory}
\label{section4}
In this section we propose a regular sampling theory for a multiply generated $U$-invariant subspace $\mathcal{V}_\Phi$ in a separable Hilbert space $\mathcal{H}$. This theory includes most of classical well known regular sampling results for shift-invariant subspaces of $L^2(\mathbb{R}^d)$. Besides, we obtain new sampling results; for instance, those associated with crystallographic groups.
\subsection{Sampling in a $U$-invariant subspace with multiple generators}
\label{subsection4-1}
Suppose that $g \mapsto U(g)$ is a unitary representation of the countable discrete abelian group $G$ on a separable Hilbert space $\mathcal{H}$, and assume that for a fixed set of generators $\Phi=\{\varphi_{1},\varphi_{2,}\ldots,\varphi_{N}\}$ in $\mathcal{H}$ the sequence $\big\{U(g)\varphi_n\big\}_{g\in G;\, n=1,2,\dots,N}$ is a Riesz sequence for $\mathcal{H}$. 
For necessary and sufficient conditions see Ref.~\cite{gerardo:19}; see also Refs.~\cite{aldroubi:96,barbieri:15, cabrelli:10,lee:93,jia:91}. 
Thus, we consider the $U$-invariant subspace in $\mathcal{H}$
\begin{equation}
\label{invariantsubspace}
\mathcal{V}_\Phi=\Big\{\sum_{n=1}^N\sum_{g\in G} x_{n}(g) U(g)\varphi_{n} \, :\,  x_{n}\in \ell^2(G),\,\, n=1, 2, \dots, N\Big\}\,.
\end{equation}
For a given matrix $A=[a_{m,n}] \in \mathcal{M}_{_{M\times N}}\big(\ell^2(G)\big)$, we consider the vector samples 
$\boldsymbol{\mathcal{L}}f$ of any
$f=\sum_{n=1}^N\sum_{g\in G} x_{n}(g) U(g)\varphi_{n}\in \mathcal{V}_\Phi$ defined by 
\begin{equation}\
\label{genalizedsamples}
\boldsymbol{\mathcal{L}}f(g):=\big(\mathcal{L}_1f(g), \mathcal{L}_2 f(g), \dots, \mathcal{L}_M f(g)\big)^\top=(A \ast \mathbf{x})(g)=\big[\mathcal{A}(\mathbf{x})\big](g)\,, \quad g\in G\,.
\end{equation}

Assume $\widehat{A}\in \mathcal{M}_{_{M\times N}}(L^\infty(\widehat{G}))$ and $\einf_{\xi\in \widehat{G}} \det \big[\widehat{A}(\xi)^*\widehat{A}(\xi)\big]>0$. Since $[\mathcal{A}(\mathbf{x})]_{m}(g)=\big\langle \mathbf{x}, T_{g}\mathbf{a}^*_{m} \big\rangle_{\ell^2_{_N}(G)}$, the optimal frame bounds given in Prop.~\ref{frame} provide relevant information about the stability of the recovering. Namely,
\[
\alpha_{_{A}}\, \|\mathbf{x}\|^2 \le  \sum_{m=1}^M \sum_{g\in G}|\mathcal{L}_{m}f(g)|^2 \le \beta_{_{A}} \|\mathbf{x}\|^2\,, \quad \mathbf{x} \in \ell^2_{_N}(G)\,,
\]
where
$
\alpha_{_{A}}=\einf_{\xi\in \widehat{G}} \lambda_{\min} [\widehat{A}(\xi)^*\widehat{A}(\xi)]$ and 
$\beta_{_{A}}=\esup_{\xi\in \widehat{G}} \lambda_{\max} [\widehat{A}(\xi)^*\widehat{A}(\xi)]$. Moreover, denoting by $\alpha_\Phi$ and $\beta_\Phi$ the Riesz bounds for $\big\{U(g)\varphi_n\big\}_{g\in G;\, n=1,2,\dots,N}$ (see \cite[Thm. 9]{gerardo:19}) we have
\[
\alpha_\Phi\alpha_{_{A}}\, \|f\|^2 \le \sum_{m=1}^M \sum_{g\in G}|\mathcal{L}_{m}f(g)|^2 \le \beta_\Phi \beta_{_{A}} \|f\|^2\,, \quad f\in \mathcal{V}_\Phi\,.
\]
Now, for the recovery of any $f\in \mathcal{V}_\Phi$ from its generalized samples \eqref{genalizedsamples}, the idea is to find a $N\times M$ matrix $\widehat{B}\in \mathcal{M}_{_{N\times M}}(L^\infty(\widehat{G}))$ such that $\widehat{B}(\xi)\,\widehat{A}(\xi)=I_{_N}$, a.e. $\xi \in \widehat{G}$. In other words,  the corresponding convolution operator $\mathcal{B}(\mathbf{x})=B\ast \mathbf{x}$ should satisfy $\mathbf{x}=\mathcal{B}\mathcal{A}(\mathbf{x})=\mathcal{B}(\boldsymbol{\mathcal{L}}f)$, that is
\[
\mathbf{x}=B\ast \boldsymbol{\mathcal{L}}f\quad \text{and} \quad
f= \sum_{n=1}^N\sum_{g\in G} x_{n}(g) U(g)\varphi_{n}\,,\quad f\in \mathcal{V}_\Phi\,.
\]
Moreover, an explicit structured sampling formula can be obtained. Namely,  the recovering of $\mathbf{x}=B\ast \boldsymbol{\mathcal{L}}f$ from the samples $\boldsymbol{\mathcal{L}}f$ can be written as an expansion in terms of a pair of dual frames (see  Eqs.~\eqref{sintesis}--\eqref{analisis} and Prop.~\ref{dualframe})
\begin{equation}
\label{expansion1}
\mathbf{x}=\sum_{m=1}^M \sum_{g\in G} \big\langle \mathbf{x}, T_g \mathbf{a}^*_{m} \big \rangle_{\ell^2_{_N}(G)}\, T_g \mathbf{b}_{m}=\sum_{m=1}^M\sum_{g\in G} \mathcal{L}_{m}f(g)\, T_g \mathbf{b}_{m} \quad \text{in $\ell^2_{_N}(G)$}\,.
\end{equation} 
Besides, we consider the natural isomorphism $\mathcal{T}_{U,\Phi}: \ell^2_{_N}(G) \rightarrow \mathcal{V}_\Phi$ which maps the standard orthonormal basis 
$\{\boldsymbol{\delta}_{g,n}\}_{g\in G;\, n=1,2,\dots,N}$ for $\ell^2_{_N}(G)$ onto the Riesz basis $\big\{U(g)\varphi_n\big\}_{g\in G;\, n=1,2,\dots,N}$ for $\mathcal{V}_\Phi$. This isomorphism satisfies the {\em shifting property}:
\begin{equation}
\label{shifting}
\mathcal{T}_{U,\Phi}\big(T_g \mathbf{b} \big)=U(g)\big(\mathcal{T}_{U,\Phi}\mathbf{b} \big)\, \quad \text{for each $g\in G$ and $\mathbf{b}\in \ell^2_{_N}(G)$}\,.
\end{equation} 
Finally, for each $f=\mathcal{T}_{U,\Phi} \mathbf{x}\in \mathcal{V}_\Phi$, applying the isomorphism $\mathcal{T}_{U,\Phi}$ on \eqref{expansion1} and the shifting property \eqref{shifting} we obtain the sampling expansion  in $\mathcal{V}_\Phi$
\begin{equation}
\begin{split}
\label{sampling1}
f&=\sum_{m=1}^M\sum_{g\in G} \mathcal{L}_{m}f(g)\, \mathcal{T}_{U,\Phi}\big(T_g \mathbf{b}_m\big)=\sum_{m=1}^M\sum_{g\in G} \mathcal{L}_{m}f(g)\, U(g)\big(\mathcal{T}_{U,\Phi}\mathbf{b}_m \big)\\
&=\sum_{m=1}^M\sum_{g\in G} \mathcal{L}_{m}f(g)\, U(g)S_{m}\quad \text{in $\mathcal{H}$}\,,
\end{split}
\end{equation}
where the reconstruction elements are given by  $S_{m}=\mathcal{T}_{U,\Phi}\mathbf{b}_m \in \mathcal{V}_\Phi$, \, $m=1,2,\dots,M$, and the sequence $\{U(g)S_{m}\}_{g\in G;\, m=1,2,\dots,M}$ is a frame for $\mathcal{V}_\Phi$.
In fact, the following sampling theorem in the subspace $\mathcal{V}_\Phi$ holds:

\begin{teo}
\label{Usampling}
Let $A=[a_{m,n}]\in \mathcal{M}_{_{M\times N}}\big(\ell^2(G)\big)$ be the matrix defining the samples $\boldsymbol{\mathcal{L}}f(g)$, $g\in G$, for each $f\in \mathcal{V}_\Phi$ as in \eqref{genalizedsamples}, and assume that its transfer matrix $\widehat{A}$ has all its entries in $L^\infty(\widehat{G})$. Then, the following statements are equivalent:
\begin{enumerate}[(a)]
\item The constant $\displaystyle{\delta_{_A}:=\einf_{\xi \in \widehat{G}} \det [\widehat{A}(\xi)^*\widehat{A}(\xi)]>0}$.

\item There exist constants $0<\alpha \le\beta$ such that
\[
\alpha \|f\|^2 \le \sum_{m=1}^M \sum_{g\in G}|\mathcal{L}_{m}f(g)|^2 \le \beta \|f\|^2\,, \quad f\in \mathcal{V}_\Phi\,.
\]

\item There exists a matrix $\widehat{B}\in \mathcal{M}_{_{N\times M}}\big(L^\infty(\widehat{G})\big)$ such that $\widehat{B}(\xi)\,\widehat{A}(\xi)=I_{_N}$, a.e. $\xi \in \widehat{G}$.

\item There exists a matrix $B\in \mathcal{M}_{_{N\times M}}(\ell^2(G))$ with $\widehat{B}\in \mathcal{M}_{_{N\times M}}(L^\infty(\widehat{G}))$, such that
\[
\mathbf{x}=B\ast \boldsymbol{\mathcal{L}}f\quad \text{and} \quad f= \sum_{n=1}^N\sum_{g\in G} x_{n}(g) U(g)\varphi_{n}\,, \quad f\in \mathcal{V}_\Phi\,.
\]
In other words, there exists a bounded convolution system $\mathcal{B}: \ell^2_{_M}(G) \rightarrow \ell^2_{_N}(G)$ such that $\mathcal{B} \mathcal{A}=\mathcal{I}_{\ell^2_{_N}(G)}$.
\item There exist $M$ elements $S_m\in \mathcal{V}_\Phi$ such that the sequence $\big\{U(g)S_m\big\}_{g\in G;\,m=1,2,\dots, M}$ is a frame for $\mathcal{V}_\Phi$ and for each $f\in \mathcal{V}_\Phi$ the reconstruction formula
\begin{equation}
\label{sampling2}
f=\sum_{m=1}^M\sum_{g\in G} \mathcal{L}_m f(g)\,U(g)S_m \quad \text{ in $\mathcal{H}$}
\end{equation}
holds. 
\item There exists a frame $\big\{S_{g,m}\big\}_{g\in G;\,m=1,2,\dots, M}$ for $\mathcal{V}_\Phi$ such that for each $f\in \mathcal{V}_\Phi$ the expansion
\[
f=\sum_{m=1}^M\sum_{g\in G} \mathcal{L}_m f(g)\,S_{g,m} \quad \text{ in $\mathcal{H}$}
\]
holds.
\end{enumerate}
In this case, the reconstruction elements $\{S_m\}_{m=1, 2, \dots,M}$ in $\mathcal{V}_\Phi$ in formula \eqref{sampling2} are necessarily obtained from the columns 
$\mathbf{b}_1, \mathbf{b}_2,\dots,\mathbf{b}_{M}$ of a matrix  $B$ satisfying $(c)$, i.e., $S_m=\mathcal{T}_{U,\Phi}\mathbf{b}_m=\sum_{n=1}^N \sum_{g\in G} b_{n,m}\,U(g)\varphi_n$, $m=1, 2, \dots,M$. 
\end{teo}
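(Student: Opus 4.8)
The plan is to prove the six statements equivalent by running a single cycle of implications $(a)\Rightarrow(c)\Rightarrow(d)\Rightarrow(e)\Rightarrow(f)\Rightarrow(b)\Rightarrow(a)$, using the convolution/frame dictionary assembled in Section~\ref{section3}, and then to settle the concluding uniqueness clause by transporting the reconstruction through the isomorphism $\mathcal{T}_{U,\Phi}$. The standing hypothesis $\widehat{A}\in\mathcal{M}_{_{M\times N}}(L^\infty(\widehat G))$ is in force throughout; in particular Prop.~\ref{bessel} makes $\mathcal{A}$ bounded, so every $\boldsymbol{\mathcal{L}}f=\mathcal{A}(\mathbf{x})$ automatically lies in $\ell^2_{_M}(G)$ and satisfies the upper sampling inequality.

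The genuine core is $(a)\Rightarrow(c)$. The plan is to exhibit the left inverse explicitly as the pseudoinverse $\widehat{B}(\xi):=[\widehat{A}(\xi)^*\widehat{A}(\xi)]^{-1}\widehat{A}(\xi)^*$, which satisfies $\widehat{B}(\xi)\widehat{A}(\xi)=I_{_N}$ wherever $\widehat{A}(\xi)^*\widehat{A}(\xi)$ is invertible, i.e.\ a.e.\ under $(a)$. The point to nail down is that $\widehat{B}\in\mathcal{M}_{_{N\times M}}(L^\infty(\widehat G))$, and this is where I expect the only real work. Since $\widehat{A}\in L^\infty$, the eigenvalues of the Hermitian matrix $\widehat{A}(\xi)^*\widehat{A}(\xi)$ are bounded above by $\beta_{_A}:=\esup_{\xi}\lambda_{\max}[\widehat{A}(\xi)^*\widehat{A}(\xi)]=\|\mathcal{A}\|^2<\infty$, while $\det[\widehat{A}(\xi)^*\widehat{A}(\xi)]\ge\delta_{_A}>0$ a.e.; writing $\lambda_{\min}$ as the determinant divided by the product of the remaining eigenvalues gives $\lambda_{\min}[\widehat{A}(\xi)^*\widehat{A}(\xi)]\ge \delta_{_A}/\beta_{_A}^{\,N-1}>0$ a.e. Hence $[\widehat{A}(\xi)^*\widehat{A}(\xi)]^{-1}$ is essentially bounded, and it is measurable because matrix inversion is continuous on the set of invertible matrices; consequently so is $\widehat{B}$.

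The remaining arrows are bookkeeping on top of Section~\ref{section3}. For $(c)\Rightarrow(d)$, note that $\widehat G$ is compact with unit mass, so $L^\infty(\widehat G)\subset L^2(\widehat G)$ and the entries of $\widehat{B}$ have $\ell^2(G)$ inverse Fourier transforms, giving $B\in\mathcal{M}_{_{N\times M}}(\ell^2(G))$; Prop.~\ref{sobre}(b) then shows $\mathcal{B}$ is a bounded convolution operator whose composition $\mathcal{B}\mathcal{A}$ has transfer matrix $\widehat{B}\widehat{A}=I_{_N}$, whence $\mathcal{B}\mathcal{A}=\mathcal{I}_{\ell^2_{_N}(G)}$ and $\mathbf{x}=B\ast\boldsymbol{\mathcal{L}}f$. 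For $(d)\Rightarrow(e)$, reading $\mathcal{B}\mathcal{A}=\mathcal{I}$ through Prop.~\ref{dualframe} makes $\{T_g\mathbf{b}_m\}$ a frame dual to $\{T_g\mathbf{a}^*_m\}$; applying $\mathcal{T}_{U,\Phi}$ together with the shifting property \eqref{shifting}, exactly as in \eqref{expansion1}--\eqref{sampling1}, turns the $\ell^2_{_N}(G)$-expansion into \eqref{sampling2} with $S_m=\mathcal{T}_{U,\Phi}\mathbf{b}_m$, and since an isomorphism carries frames to frames, $\{U(g)S_m\}$ is a frame for $\mathcal{V}_\Phi$. Finally $(e)\Rightarrow(f)$ is immediate upon setting $S_{g,m}:=U(g)S_m$.

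To close the cycle I would argue $(f)\Rightarrow(b)\Rightarrow(a)$. In $(f)\Rightarrow(b)$ the upper inequality is the automatic one noted above; for the lower one, let $T$ be the bounded synthesis operator of the frame $\{S_{g,m}\}$ with upper bound $b$, so that the reconstruction reads $f=T(\boldsymbol{\mathcal{L}}f)$ and $\|f\|^2\le b\,\|\boldsymbol{\mathcal{L}}f\|^2=b\sum_{m,g}|\mathcal{L}_mf(g)|^2$, i.e.\ the lower bound with $\alpha=1/b$. For $(b)\Rightarrow(a)$, combine the lower inequality with the Riesz lower bound $\alpha_\Phi\|\mathbf{x}\|^2\le\|f\|^2$ to get $\|\mathcal{A}(\mathbf{x})\|^2\ge\alpha\,\alpha_\Phi\|\mathbf{x}\|^2$, so $\mathcal{A}$ is bounded below, hence injective with closed range, which by Prop.~\ref{sobre}(d) is exactly $\delta_{_A}>0$. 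For the concluding necessity statement, given any $S_m$ realizing \eqref{sampling2} I would set $\mathbf{b}_m:=\mathcal{T}_{U,\Phi}^{-1}S_m$; the shifting property converts \eqref{sampling2} into $\mathbf{x}=\sum_{m,g}\mathcal{L}_mf(g)\,T_g\mathbf{b}_m=B\ast\boldsymbol{\mathcal{L}}f$, forcing $\widehat{B}\widehat{A}=I_{_N}$, while $\{U(g)S_m\}$ being a frame makes $\{T_g\mathbf{b}_m\}$ Bessel and hence $\widehat{B}\in L^\infty$ by Prop.~\ref{be}; thus $B$ satisfies $(c)$ and $S_m=\mathcal{T}_{U,\Phi}\mathbf{b}_m$, as claimed.
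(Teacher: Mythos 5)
Your proposal is correct, and it runs on the same engine as the paper's proof: the convolution/frame dictionary of Section~\ref{section3}, the pseudo-inverse to produce a left inverse, and transport through $\mathcal{T}_{U,\Phi}$ via the shifting property \eqref{shifting}. The differences are in the wiring and in one substantive step, so a short comparison is worthwhile. The paper first proves $(a)\Leftrightarrow(b)$ in isolation---via $\mathcal{T}_{U,\Phi}$, condition $(b)$ is exactly the statement that $\{T_g\mathbf{a}^*_m\}_{g\in G;\,m=1,\dots,M}$ is a frame for $\ell^2_{_N}(G)$, which Prop.~\ref{frame}(a) identifies with $\delta_{_A}>0$---and then closes the cycle $(a)\Rightarrow(c)\Rightarrow(d)\Rightarrow(e)\Rightarrow(f)\Rightarrow(a)$ by observing that $(f)$, pulled back by $\mathcal{T}_{U,\Phi}^{-1}$, yields a pair of dual frames, so Prop.~\ref{frame}(a) applies again. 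You instead fold $(b)$ into a single six-term cycle, deducing $(f)\Rightarrow(b)$ from the synthesis-operator bound of the frame $\{S_{g,m}\}$ and $(b)\Rightarrow(a)$ from boundedness below of $\mathcal{A}$ together with Prop.~\ref{sobre}(d); both routes are sound and of comparable length (one small elision: the upper inequality in $(b)$ also needs the Riesz lower bound $\alpha_\Phi\|\mathbf{x}\|^2\le\|f\|^2$ to pass from $\|\mathbf{x}\|$ to $\|f\|$, a bound you invoke only later in $(b)\Rightarrow(a)$). In the key implication $(a)\Rightarrow(c)$ the paper stays at the operator level---$\mathcal{A}^*\mathcal{A}$ is invertible and $\mathcal{B}:=(\mathcal{A}^*\mathcal{A})^{-1}\mathcal{A}^*$ is a bounded convolution operator by Prop.~\ref{sobre}, which silently rests on \cite[Thm.~7]{gerardo:19}---whereas you build the same pseudo-inverse pointwise in $\xi$ and prove essential boundedness by hand through $\lambda_{\min}[\widehat{A}(\xi)^*\widehat{A}(\xi)]\ge \delta_{_A}/\beta_{_A}^{\,N-1}$; that estimate is correct and makes this step self-contained. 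Finally, a genuine point in your favor: the theorem's concluding clause---that any $\{S_m\}$ realizing \eqref{sampling2} necessarily arises as $S_m=\mathcal{T}_{U,\Phi}\mathbf{b}_m$ with $B$ satisfying $(c)$---is argued explicitly in your last paragraph (pull back by $\mathcal{T}_{U,\Phi}^{-1}$, get Besselness hence $\widehat{B}\in \mathcal{M}_{_{N\times M}}(L^\infty(\widehat{G}))$ from Prop.~\ref{be}, then force $\widehat{B}\widehat{A}=I_{_N}$ by uniqueness of transfer matrices), whereas the paper's written proof never addresses it, leaving it implicit in the $(f)\Rightarrow(a)$ step. Your treatment closes that gap cleanly.
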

\begin{proof} 
First we note that,  since $\beta_{_A}<\infty$, condition $\delta_{_A}>0$ is equivalent to condition $\alpha_{_A}>0$.
Now we prove that $(a)$ and $(b)$ are equivalent. Indeed, since $\mathcal{T}_{U,\Phi}$ is an isomorphism condition (b) is equivalent to the existence of $0<\alpha_{1}\le \beta_{1}$ such that
\[
\alpha_{1}\|\mathbf{x}\|^2 \le \sum_{m=1}^M \sum_{g\in G}|\mathcal{L}_{m}f(g)|^2 \le \beta_{1} \|\mathbf{x}\|^2\,,\quad \mathbf{x} \in \ell^2_{_N}(G)\,.
\] 
Since Eq.~\eqref{analisis}, this is equivalent to be the sequence $\big\{T_{g} \mathbf{a}^*_{m}\big\}_{g\in G;\, m=1,2,\dots,M}$ a frame for $\ell_{N}^2(G)$. Therefore, the result follows from Prop.~\ref{frame}.

Assume now that $(a)$ holds. Then, from Prop. \ref{sobre}, operator $\mathcal{A}^*\mathcal{A}$ is invertible, and  $\mathcal{B}:=(\mathcal{A}^*\mathcal{A})^{-1}\mathcal{A}^*$ is a bounded convolution operator satisfying $\mathcal{B}\mathcal{A}=\mathcal{I}_{\ell^2_{_N}(G)}$. From Prop.~\ref{sobre}, its transfer matrix satisfies the requirement in $(c)$. 

If $\widehat{B}$ satisfies $(c)$, the bounded convolution operator $\mathcal{B}$ whose transfer matrix is  $\widehat{B}$ satisfies  
$\mathcal{B}\mathcal{A}=\mathcal{I}_{\ell^2_{_N}(G)}$ from Prop.~\ref{sobre}, that is, condition $(d)$.

 We have proved that condition $(d)$ implies a sampling expansion as \eqref{sampling2}, where $S_m=\mathcal{T}_{U,\Phi}\mathbf{b}_m$, $m=1, 2, \dots,M$, and $\mathbf{b}_1,\ldots,\mathbf{b}_{M}$ are the columns of a matrix $B$ satisfying $(d)$. Besides,  the sequence $\big\{U(g)S_m\big\}_{g\in G;\,m=1,2,\dots, M}= \mathcal{T}_{U,\Phi} \big\{T_g \mathbf{b}_{m}\big\}_{g\in G;\, m=1,2,\dots,M}$ is a frame since  \eqref{expansion1} is a frame expansion in $\ell^2_{_N}(G)$ and $\mathcal{T}^{-1}_{U,\Phi}$ an isomorphism. This proves condition $(e)$ which trivially implies condition $(f)$. 

Finally, condition $(f)$ implies $(a)$. Applying  $\mathcal{T}^{-1}_{U,\Phi}$ to the formula in $(f)$ we obtain that $\big\{T_g \mathbf{a}^*_{m}\big\}_{g\in G;\, m=1,2,\dots,M}$ and 
$\{\mathcal{T}^{-1}_{U,\Phi} S_{g,m}Ê\}_{g\in G;\, m=1,2,\dots,M}$ form a pair of dual frames for $\ell^2_{_N}(G)$; in particular, by using Prop.~\ref{frame}(a) we obtain that 
$\delta_{_A}>0$.
\end{proof}

All the possible solutions of $\widehat{B}(\xi)\widehat{A}(\xi)=I_{_N}$ a.e. $\xi \in \widehat{G}$ with entries in $L^\infty(\widehat{G})$ are given in terms of the Moore-Penrose pseudo-inverse $\widehat{A}(\xi)^\dag=\big[\widehat{A}(\xi)^*\widehat{A}(\xi)\big]^{-1}\widehat{A}(\xi)^*$ by means of the $N\times M$ matrices $\widehat{B}(\xi):=\widehat{A}(\xi)^\dag+C(\xi)\big[I_M-\widehat{A}(\xi)\widehat{A}(\xi)^\dag\big]$,
where $C(\xi)$ denotes any $N\times M$ matrix with entries in $L^\infty(\widehat{G})$. Since $\mathbf{x}=B\ast \boldsymbol{\mathcal{L}}f$, 
from Prop.~\ref{bessel} we have that
\[
\|\mathbf{x}\|_{\ell^2_N(G)}^2\le C \sum_{m=1}^M \sum_{g\in G}|\mathcal{L}_{m}f(g)|^2\quad \text{where}\,\,\, C = \esup_{\xi\in \widehat{G}} \|\widehat{B}(\xi)\|^2_2\,.
\]
The best  possible bound we can get is $C=\alpha_{_{A}}^{-1}=\einf_{\xi\in \widehat{G}} \lambda_{\min} [\widehat{A}(\xi)^*\widehat{A}(\xi)]^{-1}$, which correspond to choosing   $\widehat{B}=\widehat{A}^\dag$ or, equivalently, $B=(A^*A)^{-1}A^*$, the pseudo-inverse of $A$ (see Ref.~\cite{ole:16}).

\medskip

Notice that in Thm.~\ref{Usampling} necessarily $M\geq N$ where $N$ is the number of generators in $\mathcal{V}_\Phi$. In case $M=N$, we have:
\begin{cor}
In case $M=N$, assume that the transfer matrix $\widehat{A}(\xi)$ has all entries in $L^\infty(\widehat{G})$. The following statements are equivalent:
\begin{enumerate}
\item The constant $\displaystyle\einf_{\xi\in \widehat{G}} \big|\det [\widehat{A}(\xi)]\big|>0$.
\item There exist $N$ unique elements $S_n$, $n=1,2,\dots, N$, in $\mathcal{V}_\Phi$ such that the associated sequence
$\big\{U(g)S_{n}\big\}_{g\in G;\,n=1,2,\dots, N}$ is a Riesz basis for $\mathcal{V}_\Phi$ and the sampling formula
\[
f=\sum_{n=1}^N\sum_{g\in G} \mathcal{L}_n f(g)\,U(g)S_{n}\ \quad \text{ in $\mathcal{H}$}
\]
holds for each $f\in \mathcal{V}_\Phi$.
\end{enumerate}
Moreover, the interpolation property $\mathcal{L}_n S_{n'}(g)=\delta_{n,n'}\delta_{g,0_G}$, where $g\in G$ and $n,n'=1,2,\dots,N$, holds.
\end{cor}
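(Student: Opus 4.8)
The plan is to obtain this as the square specialization ($M=N$) of Theorem~\ref{Usampling}, the genuinely new content being the upgrade from \emph{frame} to \emph{Riesz basis}, the uniqueness of the reconstruction elements, and the biorthogonality encoded in the interpolation identity. The first thing I would record is that for a square matrix $\det[\widehat A(\xi)^*\widehat A(\xi)]=|\det\widehat A(\xi)|^2$, so condition (1) of the corollary is literally condition $(a)$ of Theorem~\ref{Usampling}. Hence (1) is already equivalent to the existence of a reconstruction formula \eqref{sampling2} in which $\{U(g)S_n\}_{g\in G;\,n=1,\dots,N}$ is a frame; the remaining work is to sharpen ``frame'' to ``Riesz basis'', to prove uniqueness, and to verify interpolation.

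For $(1)\Rightarrow(2)$ I would proceed as follows. Under (1), Proposition~\ref{sobre}(f) shows that $\mathcal A$ is an isomorphism whose inverse $\mathcal B:=\mathcal A^{-1}$ is a bounded convolution operator with transfer matrix $\widehat B=\widehat A^{-1}\in\mathcal M_{_{N\times N}}(L^\infty(\widehat G))$. Because $\widehat A(\xi)$ is invertible a.e., this is the \emph{unique} matrix solving $\widehat B(\xi)\widehat A(\xi)=I_{_N}$ a.e., so the columns $\mathbf b_1,\dots,\mathbf b_N$ of the associated $B$, and hence the elements $S_n:=\mathcal T_{U,\Phi}\mathbf b_n$, are uniquely determined; Theorem~\ref{Usampling} then delivers the sampling formula \eqref{sampling2}. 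The synthesis operator of the sequence $\{T_g\mathbf b_n\}_{g\in G;\,n=1,\dots,N}$ is exactly $\mathcal B$, an isomorphism, so that sequence is a Riesz basis of $\ell^2_{_N}(G)$; applying the isomorphism $\mathcal T_{U,\Phi}$ together with the shifting property \eqref{shifting} transports it to the Riesz basis $\{U(g)S_n\}_{g\in G;\,n=1,\dots,N}$ of $\mathcal V_\Phi$. For the uniqueness of the $S_n$ I would pull any admissible family back through $\mathcal T^{-1}_{U,\Phi}$: it produces a convolution synthesis operator $\mathcal B'$ with $\mathcal B'\mathcal A=\mathcal I_{\ell^2_{_N}(G)}$, and since $\mathcal A$ is invertible this forces $\mathcal B'=\mathcal A^{-1}=\mathcal B$, hence $\mathbf b_n'=\mathbf b_n$ and $S_n'=S_n$.

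The converse $(2)\Rightarrow(1)$ is immediate: a Riesz basis is in particular a frame, so (2) is a special case of statement $(e)$ of Theorem~\ref{Usampling}, whence its equivalent condition $(a)$ holds, which in the square case is precisely (1).

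Finally, for the interpolation property I would compute, taking $f=S_{n'}=\mathcal T_{U,\Phi}\mathbf b_{n'}$,
\[
\mathcal L_n S_{n'}(g)=[\mathcal A(\mathbf b_{n'})]_n(g)=[A\ast\mathbf b_{n'}]_n(g)=[A\ast B]_{n,n'}(g)\,.
\]
The decisive observation, and the one place where $M=N$ is essential, is that a one-sided identity $\widehat B\widehat A=I_{_N}$ between \emph{square} matrices is automatically two-sided, so $\widehat{A\ast B}=\widehat A\,\widehat B=I_{_N}$ a.e.; taking inverse Fourier transforms entrywise of the constant matrix $I_{_N}$ yields $[A\ast B]_{n,n'}(g)=\delta_{n,n'}\,\delta_{g,0_G}$, i.e. $\mathcal L_n S_{n'}(g)=\delta_{n,n'}\,\delta_{g,0_G}$. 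I expect the only delicate step to be exactly this passage from a left inverse to a two-sided convolution inverse, which is legitimate solely in the square case and is what simultaneously produces the Riesz-basis conclusion and the biorthogonality relations.
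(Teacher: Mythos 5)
Your proof is correct, and its skeleton---identify condition (1) with condition $(a)$ of Thm.~\ref{Usampling} via $\det[\widehat A(\xi)^*\widehat A(\xi)]=|\det\widehat A(\xi)|^2$, then upgrade the frame conclusion to a Riesz basis---is the same as the paper's; however, the two steps where real work is done are carried out with different tools. The paper's proof is two sentences long: it works on the \emph{analysis} side, invoking Prop.~\ref{frame}(b) (condition (1) is precisely the condition for $\{T_g\mathbf a^*_n\}_{g\in G;\,n=1,\dots,N}$ to be a Riesz basis of $\ell^2_{_N}(G)$, and the dual frame of a Riesz basis is again a Riesz basis---a duality fact the paper leaves implicit), and it obtains the interpolation property abstractly, by applying the sampling formula to $f=S_{n'}$ and using the uniqueness of coefficients in a Riesz basis expansion. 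You work instead on the \emph{synthesis} side via Prop.~\ref{sobre}(f): $\mathcal A$ is an isomorphism, $\mathcal B=\mathcal A^{-1}$ is the synthesis operator of $\{T_g\mathbf b_n\}$, and a Bessel sequence whose synthesis operator is an isomorphism is a Riesz basis; and you get interpolation by a Fourier computation resting on the fact that a one-sided inverse of a square matrix is automatically two-sided, so $\widehat A(\xi)\widehat B(\xi)=I_{_N}$ a.e. Both routes are sound. What yours buys: uniqueness of $B$, hence of the $S_n$, falls out immediately from the a.e.\ invertibility of $\widehat A(\xi)$ (in the paper this is only implicit, through the final clause of Thm.~\ref{Usampling}), and the precise role of squareness is made explicit. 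What the paper's buys: the interpolation property in one line, with no convolution-theorem bookkeeping. On that last step, note that you can avoid appealing to a convolution theorem for $\ell^2\ast\ell^2$ sequences and stay entirely inside the paper's toolkit: by Prop.~\ref{convolution}, $\widehat{\mathcal A(\mathbf b_{n'})}=\widehat A\cdot\widehat{\mathbf b}_{n'}$ is the $n'$-th column of $\widehat A\,\widehat B=I_{_N}$, a constant vector, and by Plancherel the unique element of $\ell^2_{_N}(G)$ with that Fourier transform is $g\mapsto\delta_{n,n'}\delta_{g,0_G}$; this is exactly $\mathcal L_nS_{n'}(g)=\delta_{n,n'}\delta_{g,0_G}$.
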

\begin{proof}
In this case, the square matrix $\widehat{A}(\xi)$ is invertible and the result comes out from Prop.~\ref{frame}(b). The uniqueness of the coefficients in a Riesz basis expansion gives the interpolation property.
\end{proof}
\subsubsection{A more general framework}
A slightly more general setting is motivated by condition $(f)$ in Thm.~\ref{Usampling}. Namely, let $F:=\{f_{g,n}\}_{g\in G;\, n=1,2,\dots,N}$ be a Riesz sequence in a separable Hilbert space 
$\mathcal{H}$, and let $\mathcal{V}_F:=\overline{\espan}_\mathcal{H} \{f_{g,n}\}_{g\in G;\, n=1,2,\dots,N}$ be its associated subspace, that is,
\[
\mathcal{V}_F=\Big\{\sum_{n=1}^N\sum_{g\in G} x_{n}(g)\, f_{g,n} \, :\,  x_{n}\in \ell^2(G),\,\, n=1, 2, \dots, N\Big\}\,.
\]
Given a  matrix $A=[a_{m,n}]\in \mathcal{M}_{_{M\times N}}\big(\ell^2(G)\big)$, for each $f= \sum_{n=1}^N\sum_{g\in G} x_{n}(g) \,f_{g,n}$ in $\mathcal{V}_F$ we define its data samples $\boldsymbol{\mathcal{L}} f$ by means of $A$ and $\mathbf{x}=(x_1, x_2, \dots, x_N)^\top \in \ell^2_{_N}(G)$ as
\[
\boldsymbol{\mathcal{L}} f(g):=\big( \mathcal{L}_1 f(g), \mathcal{L}_2 f(g), \dots,\mathcal{L}_M f(g)\big)^\top=(A\ast \mathbf{x})(g)\,,\quad g\in G\,.
\]
As before, the aim is the stable recovery of any $f\in \mathcal{V}_F$ from data $\boldsymbol{\mathcal{L}} f \in \ell^2_{_M}(G)$. Under the hypotheses on the matrix $A$ in Thm.\ref{Usampling} there exists a frame $\{S_{g,m}\}_{g\in G;\,m=1,2,\dots,M}$ for $\mathcal{V}_F$ such that for each $f\in \mathcal{V}_F$ the reconstruction formula
\[
f=\sum_{m=1}^M\sum_{g\in G} \mathcal{L}_m f(g)\,S_{g,m} \quad \text{ in $\mathcal{H}$}
\]
holds. Moreover, there exist $\mathbf{b}_m$ in $\ell^2_{_N}(G)$, $m=1,2,\dots,M$, such that $S_{g,m}=\mathcal{T}_F\big(T_g \mathbf{b}_m\big)$,\, $g\in G$ and $m=1,2,\dots,M$, where 
$\mathcal{T}_F: \ell^2_{_N}(G) \rightarrow \mathcal{V}_F$ stands for the natural isomorphism which maps the standard orthonormal basis $\{\boldsymbol{\delta}_{g,n}\}_{g\in G;\, n=1,2,\dots,N}$ for $\ell^2_{_N}(G)$ on the Riesz basis $\{f_{g,n}\}_{g\in G;\, n=1,2,\dots,N}$ for $\mathcal{V}_F$. Since the subspace $\mathcal{V}_F$ has not any a priori structure, the same occurs for the reconstruction functions $S_{g,m}$.
 
\subsection{Some regular sampling settings as particular examples}
\label{subsection4-2}
In this section, we illustrate the result in Thm.~\ref{Usampling} with some average sampling examples.

$\bullet$ Choose $\mathcal{H}:=L^2(\mathbb{R}^d)$, $G:=\mathbb{Z}^d$ and $\big(U(p)f\big)(t):=f(t-p)$, $t\in \mathbb{R}^d$ and $p\in \mathbb{Z}^d$. Under the hypotheses in Thm.~\ref{Usampling} for the average samples given by \eqref{samples1}, i.e., for the associated matrix $A=[a_{m,n}]$ where $a_{m,n}(p)=\langle \varphi_n, \psi_m(\cdot-p)\rangle_{L^2(\mathbb{R}^d)}$, we obtain oversampled {\em average sampling} in the classical shift-invariant subspace $V_\Phi^2$ of $L^2(\mathbb{R}^d)$ described as
\[
V_\Phi^2=\Big\{\sum_{n=1}^N\sum_{p\in \mathbb{Z}^d} x_{n}(p) \, \varphi_n(t-p)\, 
:\, x_n \in \ell^2(\mathbb{Z}^d),\, n=1,2,\dots,N \Big\}\,.
\]
Under mild hypotheses, the space $V_\Phi^2$ is a reproducing kernel Hilbert space (RKHS). For each $f\in V_\Phi^2$ a sampling expansion having the form
\[
f(t)=\sum_{m=1}^M\sum_{p\in \mathbb{Z}^d} \big\langle f, \psi_m(\cdot-p)\big\rangle_{L^2(\mathbb{R}^d)}\, S_m(t-p)\quad \text{in $L^2(\mathbb{R}^d)$}\,,
\]
holds, for some sampling functions $S_m \in V_\Phi^2$,\, $m=1, 2, \dots,M$. Moreover, the sequence $\{S_m(t-p)\}_{p\in \mathbb{Z}^d;\,m=1, 2, \dots,M}$ is a frame for $V_\Phi^2$.
As a consequence of the RKHS setting the convergence of the series in the $L^2(\mathbb{R}^d)$-norm sense implies pointwise convergence which is absolute and uniform on $\mathbb{R}^d$. As we will see later (see Section \ref{subsection4-4}), this oversampling can be reduced by sampling on a sublattice $P\mathbb{Z}^d$ of 
$\mathbb{Z}^d$, where $P$ denotes a $d\times d$ matrix with integer entries and positive determinant.

\medskip 

$\bullet$ The case where the group $G$ is the {\em semi-direct product of two groups} can be easily reduced to the described situation in Section \ref{subsection4-1}. Suppose that $(k,h)\mapsto U(k,h)$ is a unitary representation of the semi-direct product group $G=K\rtimes_\sigma H$ (or, in particular, the direct product $G=K\times H$) on a separable Hilbert space $\mathcal{H}$, where $K$ is a countable discrete group and $H$ a finite not necessarily abelian group; the subscript $\sigma$ denotes the action of the group $H$ on the group $K$, i.e., a homomorphism $\sigma: H \rightarrow Aut(K)$ mapping $h\mapsto \sigma_h$. The composition law in $G$ is 
$(k_1,h_1)\,(k_2,h_2):=(k_1\sigma_{h_1}(k_2),h_1h_2)$ for $(k_1,h_1), \,(k_2,h_2) \in G$. In general, the group $G=K\rtimes_\sigma H$ is not abelian. In case 
$\sigma_h\equiv Id_K$ for each $h\in H$ we recover the direct product group $G=K\times H$.

Assume that for a fixed $\varphi \in \mathcal{H}$ the sequence $\big\{U(k,h)\varphi\big\}_{(k,h)\in G}$ is a Riesz sequence for $\mathcal{H}$. Thus, the $U$-invariant subspace in 
$\mathcal{H}$ spanned by $\big\{U(k,h)\varphi\big\}_{(k,h)\in G}$ can be described as 
\[
\mathcal{V}_\varphi=\Big\{ \sum_{(k,h)\in G} x(k,h)\,U(k,h)\varphi\,\, :\,\, \{x(k,h)\}_{(k,h)\in G}\in \ell^2(G) \Big\}\,.
\]
Since $U(k,h)\varphi=U[(k,1_H)(0_K,h)]\varphi =U(k,1_H)\varphi_h$, where $\varphi_h:=U(0_K,h)\varphi$ for $h\in H$. Assuming that the order of the group $H$ is  $N$, the subspace $\mathcal{V}_\varphi$ coincides with the subspace $\mathcal{V}_\Phi$ generated by the set $\Phi=\{\varphi_1, \varphi_2, \dots, \varphi_N\}$, i.e.,
\[
\mathcal{V}_\Phi=\Big\{ \sum_{n=1}^N\sum_{k\in K} x_{n}(k) U(k,1_H)\varphi_{n} \, :\,  x_{n}\in \ell^2(N),\,\, n=1, 2, \dots, N\Big\}\,,
\]
where $x_n(k):=x(k,h_n)$ and $\varphi_n:=\varphi_{h_n}$, \, $n=1, 2, \dots, N$. For $M$ fixed elements $\psi_m\in \mathcal{H}$, $m=1,2, \dots ,M$, not necessarily in $\mathcal{V}_\Phi$, we consider for each $f\in \mathcal{V}_\Phi$ its generalized samples defined as
\begin{equation}
\label{samples3}
\mathcal{L}_m f(k):=\big\langle f, U(k, 1_H)\,\psi_m \big\rangle_\mathcal{H}\,,\quad \text{ $k\in K$\,, \quad $m=1,2,\dots, M$}\,.
\end{equation}
Notice that these samples are a particular case of samples \eqref{samples1}.
Then, under the hypotheses in Thm.\ref{Usampling} on the matrix $A=[a_{m,n}]$ where $a_{m,n}(k)=\big\langle \varphi, U[(-k,h_n)^{-1}]\psi_m\big\rangle_\mathcal{H}$, there exist $M$ elements $S_m\in \mathcal{V}_\Phi$ such that the sequence $\big\{U(k,1_H)S_m\big\}_{k\in K;\,m=1,2,\dots, M}$ is a frame for $\mathcal{V}_\Phi$, and for each $f\in \mathcal{V}_\Phi$ we have the reconstruction formula
\begin{equation}
\label{sampling3}
f=\sum_{m=1}^M\sum_{k\in K} \mathcal{L}_m f(k)\,U(k,1_H)S_m \quad \text{ in $\mathcal{H}$}\,.
\end{equation}

\medskip

$\bullet$ An important case of the example above is given by {\em crystallographic groups}. Namely, the Euclidean motion group $E(d)$ is the semi-direct product $\mathbb{R}^d \rtimes_{\sigma} O(d)$ corresponding to the homomorphism $\sigma : O(d) \rightarrow Aut(\mathbb{R}^d)$ given by $\sigma_{\gamma}(x) = \gamma x$, where $\gamma \in O(d)$ and $x\in \mathbb{R}^d$;  $O(d)$ denotes the orthogonal group of order $d$. The composition law on $E(d) = \mathbb{R}^d \rtimes_{\sigma} O(d)$ reads 
$(x, \gamma) \cdot  (x', \gamma') = (x + \gamma x', \gamma \gamma')$.

Let $P$ be a non-singular $d\times d$ matrix and $\Gamma$ a finite subgroup of $O(d)$ of order $N$ such that  $\gamma(P\mathbb{Z}^d)=P\mathbb{Z}^d$ for each $\gamma \in \Gamma$. We consider the {\em crystallographic group} $\mathcal{C}_{P,\Gamma}:=P\mathbb{Z}^d \rtimes_\sigma \Gamma$ and its {\em quasi regular representation} (see Ref.~\cite{barbieri:15}) on $L^2(\mathbb{R}^d)$
\[
U(p,\gamma)f(t)=f[\gamma^{\top}(t-p)]\,,\quad \text{$p\in P\mathbb{Z}^d$, $\gamma\in \Gamma$ and $f\in L^2(\mathbb{R}^d)$}\,.
\]
For a fixed $\varphi \in L^2(\mathbb{R}^d)$ such that the sequence $\big\{ U(p,\gamma)\varphi \big\}_{(p,\gamma)\in \mathcal{C}_{P,\Gamma}}$ is a Riesz sequence for $L^2(\mathbb{R}^d)$ we consider the $U$-invariant subspace in $L^2(\mathbb{R}^d)$
\begin{equation}
\label{cryspace}
\mathcal{V}_\varphi=\Big\{\sum_{(p,\gamma)\in \mathcal{C}_{P,\Gamma}} x(p,\gamma)\, \varphi [\gamma^{\top}(t-p)] \,\,:\,\, \{x(p,\gamma)\}\in \ell^2(\mathcal{C}_{P,\Gamma})\Big\}
\end{equation}
Choosing $M$ functions $\psi_m\in  L^2(\mathbb{R}^d)$, $m=1,2,\dots,M$, we consider the average samples of $f\in \mathcal{V}_\varphi$
\[
\mathcal{L}_m f(p)=\langle f, U(p, I) \psi_m\rangle=\langle f, \psi_m(\cdot-p)\rangle\,,\quad p\in P\mathbb{Z}^d\,.
\]
Denoting $\{\gamma_1=I, \gamma_2, \dots, \gamma_N\}$ the elements of the group $\Gamma$, under the hypotheses of Thm. \ref{Usampling} on the matrix $A=[a_{m,n}]$ where 
$a_{m,n}(p)=\big\langle \varphi(t), \psi_m(\gamma_n t-p)\big\rangle_{L^2(\mathbb{R}^d)}$, there exist $M\geq N$ sampling functions $S_m \in \mathcal{V}_\varphi$ for $m=1,2,\dots,M$, such that the sequence 
$\{S_m(\cdot-p)\}_{p\in P\mathbb{Z}^d;\,m=1,2,\dots,M}$ is a frame for $\mathcal{V}_\varphi$, and the sampling expansion
\begin{equation}
\label{sampling4}
f(t)=\sum_{m=1}^M \sum_{p\in P\mathbb{Z}^d} \big\langle f, \psi_m(\cdot-p)\big\rangle_{L^2(\mathbb{R}^d)} \, S_m(t-p) \quad \text{in $L^2(\mathbb{R}^d)$}
\end{equation}
holds. If the generator $\varphi$ is continuous in $\mathbb{R}^d$ and the function $t\mapsto \sum_{p\in \mathbb{Z}^d}|\varphi(t-p)|^2$ is bounded on $\mathbb{R}^d$, a standard argument shows that 
$\mathcal{V}_\varphi$ is a RKHS of bounded continuous functions in $L^2(\mathbb{R}^d)$ (see, for instance, Ref.~\cite{garcia:19}). As a consequence, convergence in $L^2(\mathbb{R}^d)$-norm implies pointwise convergence which is absolute and uniform on $\mathbb{R}^d$.
\subsection{The case of pointwise samples whenever $\mathcal{H}=L^2(\mathbb{R}^d)$}
\label{subsection4-3}
Assume here that $G$  is a countable discrete subgroup of a locally  compact abelian group $\widetilde{G}$ and let $t\in \widetilde{G} \mapsto U(t)\in \mathcal{U}(L^2(\widetilde{G}))$ be a unitary representation of 
$\widetilde{G}$ on $L^2(\widetilde{G})$. Let $\mathcal{V}_\Phi$ be the corresponding $U$-invariant subspace of 
$\mathcal{H}=L^2(\widetilde{G})$ given  in \eqref{invariantsubspace}; for any $f\in \mathcal{V}_\Phi$ we consider the samples defined in \eqref{samples2} from $M$ fixed points $t_m \in \widetilde{G}$,\, $m=1,2, \dots, M$, i.e., 
\begin{equation}
\label{samples2bis}
\mathcal{L}_{m}f(g):=\big[U(-g)f \big](t_m),\quad g\in G,\,\, m=1,2, \dots, M.
\end{equation}
Let  $A=[a_{m,n}]$ be the $M\times N$ matrix where $a_{m,n}(g)=\big[U(-g)\varphi_n \big](t_m)$,\, $g\in G$; assuming that, for each $t\in \widetilde{G}$, the sequence 
$\{[U(g)\varphi_n](t)\}_{g\in G}$ belongs to $\ell^2(G)$ for each $n=1, 2,\dots, N$, the matrix $A$ has its entries in $\ell^2(G)$. Moreover, if the functions 
$[U(g)\varphi_n](t)$,  $g\in G$ and $n=1,2,\dots, N$, are continuous on $\widetilde{G}$, and the condition
\begin{equation}
\label{bounded}
\sup_{t\in \widetilde{G}} \sum_{g\in G} \big|[U(g)\varphi_n](t) \big|^2 <+\infty\,, \quad n=1,2,\dots, N\,,
\end{equation}
holds, then the subspace $\mathcal{V}_\Phi$ is a reproducing kernel Hilbert space of continuous bounded functions in $L^2(\widetilde{G})$. In fact, it is a necessary and sufficient condition as the following result shows; its proof is analogous to that in \cite[Lemma 4.2]{garcia:19}.
\begin{prop}
For any $\{x_n(g)\}_{gÊ\in G;\,n=1,2,\dots, N} \in\ell^2_{_N}(G)$ the series $$\displaystyle{\sum_{n=1}^N \sum_{g\in G} x_n(g)\, [U(g)\varphi_n](t)}$$ converges pointwise to a  continuous bounded function on $\widetilde{G}$ if and only if for each $g\in G$ and $n=1,2,\dots, N$, the function $U(g)\varphi_n$ is continuous on 
$\widetilde{G}$, and condition \eqref{bounded} holds.
\end{prop}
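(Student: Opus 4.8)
The plan is to prove both implications, organizing everything around the \emph{sampling vectors} $\mathbf{k}_t\in \ell^2_{_N}(G)$ (a priori only formal sequences) whose $n$-th component is $g\mapsto k_{n,t}(g):=\overline{[U(g)\varphi_n](t)}$. With this notation the series in the statement is exactly the formal inner product $\sum_{n=1}^N\sum_{g\in G}x_n(g)\,[U(g)\varphi_n](t)=\langle \mathbf{x},\mathbf{k}_t\rangle_{\ell^2_{_N}(G)}$, and condition \eqref{bounded} is precisely the assertion that $\sup_{t\in\widetilde{G}}\|\mathbf{k}_t\|^2_{\ell^2_{_N}(G)}<+\infty$, since $\|\mathbf{k}_t\|^2_{\ell^2_{_N}(G)}=\sum_{n=1}^N\sum_{g\in G}\big|[U(g)\varphi_n](t)\big|^2$. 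Thus the proposition identifies uniform $\ell^2$-boundedness of the vectors $\mathbf{k}_t$, together with continuity of each generator translate $U(g)\varphi_n$, as the exact condition for the expansion to define a bounded continuous function for every $\mathbf{x}\in\ell^2_{_N}(G)$.

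For the sufficiency, assume each $U(g)\varphi_n$ is continuous and that \eqref{bounded} holds, i.e. $C_n:=\sup_{t}\sum_{g}|[U(g)\varphi_n](t)|^2<\infty$. First I would obtain pointwise absolute convergence and a uniform bound by Cauchy--Schwarz in $\ell^2(G)$: for every $t$, $\sum_{n}\sum_{g}|x_n(g)|\,|[U(g)\varphi_n](t)|\le\sum_{n}\|x_n\|_{\ell^2(G)}\,C_n^{1/2}$, which is finite and independent of $t$, so the sum defines a bounded function. Continuity then follows from a uniform tail estimate: for a finite set $F\subset G$ the same Cauchy--Schwarz bound gives $\big|\sum_{n}\sum_{g\notin F}x_n(g)[U(g)\varphi_n](t)\big|\le\sum_{n}\big(\sum_{g\notin F}|x_n(g)|^2\big)^{1/2}C_n^{1/2}$, whose right-hand side is independent of $t$ and tends to $0$ as $F$ exhausts $G$, because $x_n\in\ell^2(G)$. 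Hence the partial sums over finite $F$, each continuous as a finite combination of the continuous functions $U(g)\varphi_n$, converge uniformly on $\widetilde{G}$, and the uniform limit is continuous.

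For the necessity, continuity of each $U(g)\varphi_n$ is immediate: taking $\mathbf{x}$ to be the standard basis vector supported at $(g,n)$ collapses the series to $[U(g)\varphi_n](t)$, which is continuous by hypothesis. The substantial point is to recover \eqref{bounded}, and here I would argue by two applications of the uniform boundedness principle on the Hilbert space $\ell^2_{_N}(G)$. First, for fixed $t$, convergence of the series for every $\mathbf{x}$ makes the partial-sum functionals $P_F(\mathbf{x})=\langle \mathbf{x},\mathbf{k}_t^F\rangle$ (where $\mathbf{k}_t^F$ is the truncation of $\mathbf{k}_t$ to a finite $F\subset G$, with $\|P_F\|=\|\mathbf{k}_t^F\|$) satisfy $\sup_F|P_F(\mathbf{x})|<\infty$ for each $\mathbf{x}$; Banach--Steinhaus then gives $\sup_F\|\mathbf{k}_t^F\|<\infty$, i.e. $\mathbf{k}_t\in\ell^2_{_N}(G)$, so that $L_t(\mathbf{x}):=\langle \mathbf{x},\mathbf{k}_t\rangle$ is a bounded functional with $\|L_t\|=\|\mathbf{k}_t\|$. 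Second, since the limit function $f(t)=\sum_{n}\sum_{g}x_n(g)[U(g)\varphi_n](t)$ is bounded for every $\mathbf{x}$, one has $\sup_t|L_t(\mathbf{x})|=\|f\|_{\infty}<\infty$ for each fixed $\mathbf{x}$; Banach--Steinhaus applied to the family $\{L_t\}_{t\in\widetilde{G}}$ yields $\sup_t\|\mathbf{k}_t\|=\sup_t\|L_t\|<\infty$, which is exactly \eqref{bounded}. The main obstacle is this two-stage structure: one cannot directly apply the uniform boundedness principle across the family indexed by $t$ until each $L_t$ has been shown to be a genuinely \emph{bounded} functional, so the deduction $\mathbf{k}_t\in\ell^2_{_N}(G)$ from convergence-for-all-$\mathbf{x}$ is the indispensable preliminary step.
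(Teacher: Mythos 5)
Your proof is correct, and it follows what is essentially the intended argument: the paper itself gives no proof of this proposition (it only remarks that the proof is analogous to Lemma 4.2 of the cited reference \cite{garcia:19}), and that standard argument is precisely your combination of Cauchy--Schwarz plus a uniform tail estimate for sufficiency, and the two-stage Banach--Steinhaus argument (first to get $\mathbf{k}_t\in\ell^2_{_N}(G)$ for each fixed $t$, then across the family $\{L_t\}_{t\in\widetilde{G}}$) for necessity. Your explicit handling of the preliminary step--that one must first establish each $L_t$ is a bounded functional before invoking uniform boundedness over $t$--is exactly the point that makes the necessity direction work.
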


\medskip

\noindent Notice that, whenever $\mathcal{H}=L^2(\mathbb{R}^d)$ and  $[U(p)f](t):=f(t-p)$,\, $t\in \mathbb{R}^d$\,,\, $p\in \mathbb{Z}^d$, the samples in  \eqref{samples2bis} read
\[
\mathcal{L}_{m}f(p)=\big[U(-p)f \big](t_m)=f(p+t_m)\,, \quad \text{$p\in \mathbb{Z}^d$ \,\text{ and }\, $m=1,2,\dots, M$}\,.
\]

\medskip

$\bullet$ Choosing $\mathcal{H}:=L^2(\mathbb{R}^d)$, $G:=\mathbb{Z}^d$ and $\big(U(p)f\big)(t):=f(t-p)$, $t\in \mathbb{R}^d$ and $p\in \mathbb{Z}^d$. Thus, under hypotheses in 
Thm.~\ref{Usampling} on the matrix $A=[a_{m,n}]$ where $a_{m,n}(p)=\varphi_n(t_m+p)$ we obtain oversampled pointwise sampling in the shift-invariant subspace $V_\Phi^2$ of $L^2(\mathbb{R}^d)$, i.e., for each $f\in V_\Phi^2$ a sampling expansion having the form
\[
f(t)=\sum_{m=1}^M\sum_{p\in \mathbb{Z}^d}f(p+t_m)\, S_m(t-p)\,,\quad t\in \mathbb{R}^d
\]
holds, for some functions $S_m \in V_\Phi^2$,\, $m=1, 2, \dots,M$. The convergence of the series in $L^2(\mathbb{R}^d)$-norm implies pointwise convergence which is absolute and uniform on $\mathbb{R}^d$.

\medskip

$\bullet$ In the case of the quasi regular representation of the crystallographic group $\mathcal{C}_{P,\Gamma}=P\mathbb{Z}^d\rtimes_\sigma \Gamma$, for each $f\in \mathcal{V}_\varphi$ defined in \eqref{cryspace} the samples \eqref{samples2} read
\[
\mathcal{L}_m f(p)=\big[U(-p,I)f\big](t_m)=f(p+t_m)\,, \quad p\in P\mathbb{Z}^d \,\text{ and }\, m=1,2,\dots, M\,.
\]
Under hypotheses in Thm.~\ref{Usampling} on the matrix $A=[a_{m,n}]$ where $a_{m,n}(p)=\varphi[\gamma_n^\top(t_m-p)]$, there exist $M$ functions $S_m\in \mathcal{V}_\varphi$, $m=1,2,\dots, M$, such that for each $f\in \mathcal{V}_\varphi$ the sampling formula
\[
f(t)=\sum_{m=1}^M \sum_{p\in P\mathbb{Z}^d} f(p+t_m)\,S_m(t-p)\,,\quad t\in \mathbb{R}^d
\]
holds. The convergence of the series in the $L^2(\mathbb{R}^d)$-norm sense implies pointwise convergence which is absolute and uniform on $\mathbb{R}^d$.
\subsection{Sampling in a subgroup $H$ of $G$}
\label{subsection4-4}
Let $(G, +)$ be a countable discrete LCA group, and let  $H$ be a  subgroup of $G$ with finite index $L$. We fix a set $\{g_0, g_1, \dots,g_L\}$ of representatives of the cosets of $H$, i.e., the group $G$ can be decomposed as
\[
G= (g_1+H) \cup (g_2+H) \cup \dots \cup (g_{L}+H)\,\, \text{with}\,\, (g_l+H) \cap (g_{l'}+H)=\varnothing \,\,\text{for $l\neq l'$}\,.
\]
Given a unitary representation $g\mapsto U(g)$ of the group $G$ on a separable Hilbert space $\mathcal{H}$ and a set of generators $\Phi=\{\varphi_{1},\varphi_{2,}\ldots,\varphi_{N}\}$ in $\mathcal{H}$, we consider the subspace $\mathcal{V}_{\Phi}=\overline{\espan}_{\mathcal{H}}\{U(g)\varphi_n\}_{g\in G;\,n=1,2,\dots,N}$.
In case $\{U(g)\varphi_n\}_{g\in G;\,n=1,2,\dots,N}$ is a Riesz sequence in $\mathcal{H}$, it can be expressed as
\[
\mathcal{V}_\Phi=\Big\{\sum_{n=1}^N\sum_{g\in G} x_n(g)\,U(g)\varphi_n\, :\,  x_n \in \ell^2(G)\Big\}=
\Big\{\sum_{n=1}^N\sum_{l=1}^L \sum_{h\in H} x_n(g_l+h)\,U(g_l+h)\varphi_n\Big\}\,,
\]
where the sequence
\[
\mathbf{x}(h) :=\big(x_{11}(h),\dots, x_{1L}(h), x_{21}(h),\dots, x_{2L}(h), \dots,x_{N1}(h),\dots, x_{NL}(h) \big)^\top\in \ell^2_{_{NL}}(H)\,,
\]
with $x_{nl}(h):=x_n(g_l+h)$. From now on we consider a new index $nl$, from $11$ to $NL$,  whose order is the indicated above.
Next, for $M$ fixed elements $\psi_{m}\in \mathcal{H}$,\, $m=1, 2, \dots,M$, not necessarily in $\mathcal{V}_\Phi$, for each $f\in \mathcal{V}_\Phi$ we define its generalized samples
\begin{equation}
\label{samples4}
\mathcal{L}_{m}f(h)=\big\langle f, U(h)\psi_{m} \big\rangle_{\mathcal{H}},\quad \text{$h\in H$ and  $m=1,2,\dots, M$}\,.
\end{equation}
For $f=\sum_{n=1}^N\sum_{l=1}^L \sum_{k\in H} x_n(g_l+k)\,U(g_l+k)\varphi_n$ in $\mathcal{V}_\Phi$, the samples \eqref{samples4} can be expressed as
\[
\mathcal{L}_{m}f(h)=\sum_{n=1}^N\sum_{l=1}^L \sum_{k\in H} x_n(g_l+k)\big\langle \varphi_n, U(h-g_l-k)\psi_m\big\rangle=\sum_{n=1}^N\sum_{l=1}^L \big(a_{m,nl}\ast_Hx_{nl}\big)(h)\,,
\]
where $a_{m,nl}(h):=\big\langle \varphi_n, U(h-g_l)\psi_m\big\rangle_\mathcal{H}$,\, $h\in H$, for $m=1, 2, \dots,M$,\, $n=1, 2, \dots,N$ and $l=1, 2, \dots,L$. Notice that each $a_{m,nl} \in \ell^2(H)$. The subscript $*_H$ means convolution over the subgroup $H$.

If we consider the $M\times NL$ matrix $A=[a_{m,nl}]$, the hypotheses in Thm.\ref{Usampling} on matrix $A$ proves, with slight differences, that $M\geq NL$ and there exists a frame sequence $\big\{T_h \mathbf{b}_{m}\big\}_{h\in H;\, m=1,2,\dots,M}$ for 
$\ell^2_{_{NL}}(H)$ which  is a dual frame of $\big\{T_h \mathbf{a}_{m}^*\big\}_{h\in H;\, m=1,2,\dots,M}$. Thus, for any $\mathbf{x} \in \ell^2_{_{NL}}(G)$ we have  
\begin{equation}
\label{expansion2}
\mathbf{x}=\sum_{m=1}^M \sum_{h\in H} \big\langle \mathbf{x}, T_h \mathbf{a}_{m}^* \big \rangle_{\ell^2_{_{NL}}(H)}\, T_h \mathbf{b}_{m}=\sum_{m=1}^M\sum_{h\in H} \mathcal{L}_{m}f(h)\, T_h \mathbf{b}_{m} \quad \text{in $\ell^2_{_{NL}}(H)$}\,.
\end{equation}
Next, the natural isomorphism $\mathcal{T}_{U,\Phi}: \ell^2_{_{NL}}(H) \rightarrow \mathcal{V}_\Phi$ which maps the standard orthonormal basis 
$\{\boldsymbol{\delta}_{h,nl}\}$ for $\ell^2_{_{NL}}(H)$ on the Riesz basis $\big\{U(g_l+h)\varphi_n\big\}$ for $\mathcal{V}_\Phi$, and satisfies the {\em shifting property}
$\mathcal{T}_{U,\Phi}\big(T_h \mathbf{b} \big)=U(h)\big(\mathcal{T}_{U,\Phi}\mathbf{b} \big)$ for each $h\in H$ and $\mathbf{b}\in \ell^2_{_{NL}}(H)$. 

Applying the isomorphism $\mathcal{T}_{U,\Phi}$ in \eqref{expansion2} we obtain that any $f=\mathcal{T}_{U,\Phi} \mathbf{x} \in \mathcal{V}_\Phi$ can be recovered from data $\{\mathcal{L}_{m}f(h)\}_{h\in H;\,m=1,2,\dots,M}$ by means of the sampling formula
\begin{equation}
\label{sampling5}
f=\sum_{m=1}^M\sum_{h\in H} \mathcal{L}_m f(h)\,U(h)S_m \quad \text{ in $\mathcal{H}$}\,,
\end{equation}
for some sampling functions $S_m=\mathcal{T}_{U,\Phi}\mathbf{b}_m \in \mathcal{V}_\Phi$,\, $m=1, 2, \dots, M$. Moreover, the sequence $\big\{U(h)S_m\big\}_{h\in H;\,m=1,2,\dots, M}$ is a frame for $\mathcal{V}_\Phi$.

\medskip

$\bullet$ In particular, consider $\mathcal{H}:=L^2(\mathbb{R}^d)$, $G:=\mathbb{Z}^d$ and $\big[U(p)f\big](t):=f(t-p)$, $t\in \mathbb{R}^d$ and $p\in \mathbb{Z}^d$. Let $P\mathbb{Z}^d$ be a sublattice in $\mathbb{Z}^d$ where $P$ denotes a $d\times d$ matrix of integer entries with positive determinant $L:=\det P$. Under the hypotheses in Thm.~\ref{Usampling} on the matrix $A=[a_{m,nl}]$ where $a_{m,nl}(p)=\big\langle \varphi_n(t), \psi_m(t-p+g_l)\big\rangle_{L^2(\mathbb{R}^d)}$, the sampling formula \eqref{sampling5} gives an average sampling formula in the classical shift-invariant subspace $V_\Phi^2$ of $L^2(\mathbb{R}^d)$, i.e., for each $f\in V_\Phi^2$ formula \eqref{sampling5} reads
\[
f(t)=\sum_{m=1}^M\sum_{p\in P\mathbb{Z}^d} \langle f, \psi_m(\cdot-p)\rangle_{L^2(\mathbb{R}^d)}\, S_m(t-p)\,,\quad t\in \mathbb{R}^d\,,
\]
for some sampling functions $S_m \in V_\Phi^2$,\, $m=1, 2, \dots,M$. Moreover, the sampling sequence $\{S_m(t-p)\}_{p\in P\mathbb{Z}^d;\,m=1, 2, \dots,M}$ is a frame for $V_\Phi^2$.
The convergence of the series in the $L^2(\mathbb{R}^d)$-norm sense implies pointwise convergence which is absolute and uniform on $\mathbb{R}^d$.

\medskip

$\bullet$ Whenever $\mathcal{H}=L^2(\mathbb{R}^d)$, for $M$ fixed points $t_m\in \mathbb{R}^d$, we consider in the shift-invariant subspace $V_\Phi^2$ the samples $\mathcal{L}_{m}f(h):=\big[U(-h)f \big](t_m)$, $h\in H$,\, $m=1,2, \dots, M$, for any $f=\sum_{n=1}^N\sum_{l=1}^L \sum_{k\in H} x_n(g_l+k)\,U(g_l+k)\varphi_n$ in $\mathcal{V}_\Phi$ the samples can be expressed as
\[
\mathcal{L}_{m}f(h)=\sum_{n=1}^N\sum_{l=1}^L \sum_{k\in H} x_n(g_l+k) \big[U(k-h)U(g_l)\varphi_n\big](t_m)=\sum_{n=1}^N\sum_{l=1}^L \big(a_{m,nl}\ast_Hx_{nl}\big)(h)\,,
\]
where $a_{m,nl}(h):=\big[U(-h+g_l)\varphi_n\big](t_m)$,\, $h\in H$, for $m=1, 2, \dots,M$,\, $n=1, 2, \dots,N$ and $l=1, 2, \dots,L$. 

In particular, if we sample any function in  $V_\Phi^2$ on a sublattice $P\mathbb{Z}^d$ of $\mathbb{Z}^d$, under the hypotheses in Thm.\ref{Usampling} on the $M\times NL$ matrix $A=[a_{m,nl}]$ where $a_{m,nl}(p)=\varphi_n(t_m+p-g_l)$, there exist $M\geq NL$ sampling functions $S_m \in V_\Phi^2$,\, $m=1, 2, \dots,M$, such that we recover any $f\in V_\Phi^2$
from the samples $\{\mathcal{L}_{m}f(p)=f(p+t_m)\}_{p\in P\mathbb{Z}^d;\,m=1, 2, \dots,M}$ by means of the sampling formula
\[
f(t)=\sum_{m=1}^M\sum_{p\in P\mathbb{Z}^d} f(p+t_m)\, S_m(t-p)\,,\quad t\in \mathbb{R}^d\,.
\]
Moreover, the sampling sequence $\{S_m(t-p)\}_{p\in P\mathbb{Z}^d;\,m=1, 2, \dots,M}$ is a frame for $V_\Phi^2$.
The convergence of the series in the $L^2(\mathbb{R}^d)$-norm sense implies pointwise convergence which is absolute and uniform on $\mathbb{R}^d$.
\subsection{Some final comments}
\label{subsection4-5}
Our main sampling result, Theorem~\ref{Usampling}, involves some sampling conditions appearing in the mathematical literature; thus, condition $(b)$ says that $\boldsymbol{\mathcal{L}}=(\mathcal{L}_1, \mathcal{L}_2, \dots, \mathcal{L}_M)$ is a {\em stable averaging sampler} for $\mathcal{V}_\Phi$ as it was introduced in Ref.~\cite{aldroubi:05}. Besides, formula \eqref{sampling2} is the expected reconstruction formula in the $U$-invariant subspace $\mathcal{V}_\Phi$. The differences can be found in conditions $(a)$-$(c)$-$(d)$ used here since these conditions are directly related to the filtering process defining the samples \eqref{genalizedsamples}. Consequently, these conditions are given in terms of the convolution system $\mathcal{A}$ and its transfer matrix $\widehat{A}$. In references concerning shift-invariant subspaces these conditions are given in terms of some Gram matrices (see, for instance, Refs.~\cite{aldroubi:96,aldroubi:05}), or in terms of the so called {\em modulation matrix} whose entries are given in terms of the Zak transform as $\big(Z\mathcal{L}_m \varphi_n\big)(0,w)$ (see, for instance, Refs.~\cite{hector:14,garcia:09,shang:07}).

As it was mentioned before, some previous sampling results can be seen as particular examples of this approach. As a non-exhaustive sample of such examples we can cite sampling in shift-invariant subspaces Refs.~\cite{aldroubi:05,bhandari:12,garcia:06,garcia:09,kang:11,shang:07,zhou:99}, and sampling in $U$-invariant subspaces Refs.\cite{hector:14,garcia:15,garcia:17,pohl:12}. Besides, as it was showed in Section \ref{subsection4-2}, the present approach opens new sampling settings: for instance, those related with crystallographic groups involving examples of practical interest.

\bigskip

\noindent{\bf Acknowledgments:}
This work has been supported by the grant MTM2017-84098-P from the Spanish {\em Ministerio de Econom\'{\i}a y Competitividad (MINECO)}.


\end{document}